\def \eps {\varepsilon}

\documentclass[12pt]{amsart}

\usepackage{fullpage}
\usepackage{amsfonts}
\usepackage{bbding}
\usepackage[dvips]{graphicx}
\usepackage{amsfonts,amssymb,amsmath,amsthm,amscd}
\usepackage{mathrsfs}
\usepackage{xcolor}
\usepackage{empheq}
\usepackage{adforn}
\usepackage{cancel}
\usepackage{xr-hyper}
\usepackage{xr}
\usepackage{mdframed}
\usepackage{mathdots}
\usepackage{enumerate}
\usepackage{lmodern}
\usepackage{mdframed}
\usepackage{stmaryrd}
\usepackage{blindtext}
\usepackage[all, knot]{xy}
\usepackage{leftidx}
\usepackage{accents}

\definecolor{slightblue}{rgb}{.8, .8, 1}
\definecolor{hair}{RGB}{100,225,190}
\definecolor{ruby}{RGB}{220,50,120}
\definecolor{grass}{RGB}{150,220,110}

\usepackage[colorlinks=true, pdfstartview=FitP, linkcolor=hair!80!black, citecolor=hair!80!black, urlcolor=hair!80!black]{hyperref}

\usepackage[T1]{fontenc}

\newtheorem{theorem}{Theorem}
\newtheorem{proposition}[theorem]{Proposition}
\newtheorem{lemma}[theorem]{Lemma} 
\newtheorem{corollary}[theorem]{Corollary}

\theoremstyle{definition}

\theoremstyle{remark}  \numberwithin{equation}{section}
\numberwithin{figure}{section}


\newcommand{\Pb}{\mathbb{P}}

\newcommand{\Rb}{\mathbb{R}}

\newcommand{\Ub}{\mathbb{U}}

\newcommand{\Cc}{\mathcal{C}}
\newcommand{\Dc}{\mathcal{D}}
\newcommand{\Ec}{\mathcal{E}}
\newcommand{\Fc}{\mathcal{F}}

\newcommand{\Nc}{\mathcal{N}}

\newcommand{\Sc}{\mathcal{S}}
\newcommand{\Tc}{\mathcal{T}}


\usepackage{sidecap,subcaption}
\usepackage[section]{placeins}
\usepackage{wrapfig}
\usepackage{lpic}

\sidecaptionvpos{figure}{c}

\begin{document}

\title {The law of a point process of Brownian excursions\\ in a domain is determined by the law of its trace}

\author{Wei Qian}  
\author{Wendelin Werner}
\address {
 Center for Mathematical Sciences, University of Cambridge,
Wilberforce Rd., Cambridge CB3 0WB, United Kingdom}
\email {wq214@cam.ac.uk}
\address {
Department of Mathematics,
ETH Z\"urich, R\"amistr. 101,
8092 Z\"urich, Switzerland}

\email{wendelin.werner@math.ethz.ch}
\date{}

\begin {abstract}
We show the result that is stated in the title of the paper, which has consequences about decomposition of  Brownian loop-soup clusters in two dimensions. 
\end {abstract}

\maketitle

\section{Introduction}

\subsection*{Main result of the present paper and strategy of proof}

When $x$ and $y$ are two distinct boundary points of the unit disk $\Ub$,
 let us denote by $P_{x,y}$ the natural probability measure on Brownian excursions from $x$ to $y$ in $\Ub$ (that can be for instance defined as the limit when $z \to x$ of the law of 
 Brownian motion started from $z \in \Ub$ and conditioned to exit $\Ub$ at $y$, see \cite{MR2045953, MR1796962}). Using their conformal invariance properties, one can also define such 
 Brownian excursions in any simply connected planar domain, and it is 
also easy to generalize the definition to multiply connected domains. 

It is easy to see  that the image of $P_{x,y}$ under time-reversal is $P_{y,x}$. 
This leads to the definition of an {\emph{unoriented}} excursion which is obtained from 
an oriented excursion by forgetting its orientation (i.e., we say that an excursion and its time-reversal represent the same 
unoriented excursion). In the sequel, we will denote an ordered pair of distinct endpoints by $(x,y)$  and the
corresponding unordered pair by the set $\{ x,y\}$.

Suppose now that we are given a point process $\Cc = (\{ x_i,y_i\})_{i \in I}$ of unordered pairs of distinct points on $\partial \Ub$ 
(let us stress that this point process is not necessarily a Poisson point process). 
Then, conditionally on this point process, one can independently sample for each $i$ an unoriented Brownian excursion $E_i$ using the measure $P_{x_i, y_i}$. 
This gives rise to the point process of unoriented excursions $\Ec=(E_i)_{i \in I}$. 

Throughout this paper, we will assume that the law of $\Cc$ is chosen so that almost surely, 
$$ \sum_i | x_i -y_i|^2 < \infty$$
(where here and in the rest of the paper, $|\cdot|$ denotes the Euclidean norm in the complex plane). 
It is immediate to check via Borel-Cantelli's lemma that this property of $\Cc$ is equivalent to the property that 
the corresponding point process of excursions $\Ec$ is almost surely {\em locally finite}, i.e., that for every positive $\eps$, only finitely many of the excursions
in $\Ec$ do have a diameter greater than $\eps$.
Let us stress that this does not exclude the possibility (and this is a case that motivates 
the present work) that $\Ec$ has infinitely many excursions and that the union of all pairs in $\Cc$ is almost surely dense on $\partial \Ub$.

One can then define the trace ${\mathcal T}(   \Ec)$ of $\Ec$ as the closure of the 
union of the traces of all the excursions $E_i$. This is a random compact set in the closed unit disk.
Note that because of the local finiteness, knowing this trace is in fact the same as knowing the cumulative occupation time measure
(that associates to each open set $A$ the cumulated time spent in $A$ by all the excursions $E_i$, see \cite{MR1229519}). 
When the union of pairs of points of $\Cc$ is dense on $\partial \Ub$, this trace is a rather messy convoluted set, as a given excursion could typically intersect 
infinitely many others. 

The main purpose of the present paper is to explain the proof of the following result: 

\begin{proposition}\label{main theorem}
If one knows the law of the trace of $\Ec$, then one can recover the law of $\Cc$ and the law of $\Ec$. 
\end{proposition}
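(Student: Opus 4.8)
The plan is to split the argument into two parts: first reduce everything to recovering the law of $\Cc$, and then reconstruct the law of $\Cc$ from suitable local information about the trace near $\partial\Ub$. The reduction is immediate: conditionally on $\Cc$ the excursions $E_i$ are independent with the explicit laws $P_{x_i,y_i}$, so the law of $\Ec$ is a fixed measurable functional of the law of $\Cc$, and it is enough to recover the latter. In turn, since $\Cc$ is a.s.\ a Radon point configuration on the (locally compact, second countable) space of unordered pairs of distinct boundary points — local finiteness near the diagonal being exactly the hypothesis $\sum_i|x_i-y_i|^2<\infty$ — its law is determined by the Laplace functionals $\Ef[\exp(-\sum_i g(\{x_i,y_i\}))]$ as $g$ ranges over a determining subclass of nonnegative continuous functions vanishing near the diagonal, equivalently by the joint laws of the numbers of pairs with endpoints in prescribed families of disjoint closed boundary arcs.

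The elementary link between the two sides is this: for a compact set $A\subset\overline\Ub$, the event $\{\Tc(\Ec)\cap A=\emptyset\}$ is precisely the event that no excursion $E_i$ meets $A$, so, writing $\langle\Cc,f\rangle:=\sum_i f(\{x_i,y_i\})$,
$$\Pf\big[\Tc(\Ec)\cap A=\emptyset\big]=\Ef\Big[\prod_i\big(1-P_{x_i,y_i}[\,E\cap A\neq\emptyset\,]\big)\Big]=\Ef\big[\exp(-\langle\Cc,\psi_A\rangle)\big],\qquad \psi_A(\{x,y\})=-\log\big(1-P_{x,y}[\,E\cap A\neq\emptyset\,]\big),$$
and more generally any $\Ef[F(\Tc(\Ec))]$, expanded conditionally on $\Cc$ using independence of the $E_i$, yields a functional of $\Cc$ computable from the law of $\Tc(\Ec)$; in particular one can also use functionals $F$ detecting connectivity of $\Tc(\Ec)$, not just avoidance. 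The issue is that one must choose $A$ and $F$ so that the resulting functions on the space of pairs genuinely see \emph{which} endpoint of an excursion is paired with \emph{which} — the naive avoidance functions risk recording only the two endpoints separately — and, moreover, in the motivating case where the endpoints are dense in $\partial\Ub$ one has $\Pf[\Tc(\Ec)\cap A=\emptyset]=0$ for every fixed $A\subset\Ub$, so global avoidance events are useless and one is forced to work at small scales.

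Both difficulties are handled by localizing near $\partial\Ub$, using the elementary observation that an excursion reaching distance $\eps$ from $\partial\Ub$ has trace-diameter at least $\eps$ (its trace contains a point of $\partial\Ub$ and a point at distance $1-\eps$ from the origin): by the assumed local finiteness, for each $\eps$ only finitely many excursions are ``macroscopic at scale $\eps$'', the rest being confined to the $\eps$-collar of $\partial\Ub$. One analyses the finitely many macroscopic excursions first: their traces are continua joining two boundary points and reaching depth $\eps$, and the conditional law of their union depends genuinely on the pairing — for instance through the mutual (non)intersection probabilities of distinct excursions, which already separate a ``linked'' from an ``unlinked'' pairing of the same four endpoints (forced crossing versus positive chance of disjointness), and through the behaviour of $\Tc(\Ec)$ near each boundary endpoint, where after rescaling each excursion contributes an independent half-plane ``excursion germ'', so that the number of strands emanating from a boundary region, and the matching of the two strands of a single excursion, can in principle be read off the set $\Tc(\Ec)$. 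Having identified and ``removed'' the macroscopic excursions (at the level of laws), one is left with the excursions confined to the collar; mapping the collar conformally onto a standard annulus and iterating over dyadic scales should then handle all of $\Cc$, letting $\eps\to0$ and using the coarser data already recovered to pin down the finer scales.

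I expect the heart of the proof — and the main obstacle — to be the disentangling in the core step: showing that the random closed set $\Tc(\Ec)$ alone determines, in law, how many excursion strands emerge from each boundary region and which strands belong to the same excursion, despite the fact that distinct excursions may cross each other arbitrarily and that a connection visible inside $\Tc(\Ec)$ may be produced by a chain of several excursions rather than by a single one. Controlling such chains, so that connectivity inside $\Tc(\Ec)$ can be localized and ultimately attributed to individual excursions, together with a sufficiently precise description of Brownian excursions near the boundary and an inclusion–exclusion argument passing from well-separated configurations to the general case, is where the real work will lie.
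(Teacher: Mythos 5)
Your reduction (recover the law of $\Cc$, since the excursions are conditionally independent with explicit laws given $\Cc$) and your localization (only finitely many excursions reach depth $\eps$, so work scale by scale) both match the paper's set-up, and you have correctly located the crux: deciding from the random set $\Tc(\Ec)$ alone how many excursion strands cross a given region and which strands belong to the same excursion, despite mutual crossings and despite the fact that connectivity inside $\Tc(\Ec)$ can be produced by a chain of several excursions. But your proposal stops exactly there: the passages ``can in principle be read off'' and ``is where the real work will lie'' are not an argument, and the two mechanisms you do sketch are insufficient. The linked/unlinked dichotomy (forced crossing versus positive probability of disjointness) is a qualitative observation about two excursions with four prescribed endpoints; it gives no quantitative way to recover the joint density of the pairing, and it says nothing about three or more excursions whose traces form a connected tangle. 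The ``excursion germ at a boundary endpoint'' idea is worse off: in the motivating case the endpoints of $\Cc$ are dense on $\partial \Ub$, so every neighbourhood of every boundary point contains infinitely many other excursions whose traces overlap the germ you want to isolate, and ``the number of strands emanating from a boundary region'' is not a finite, trace-measurable quantity. (The paper avoids this by never working at $\partial\Ub$ directly: it recovers the point process $\Cc_\delta$ of first/last hitting points on the inner circle $\partial U_\delta$, where local finiteness holds, and then lets $\delta\to 0$.)

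The missing idea is a trace-measurable \emph{certificate} that a prescribed thin tube crossing $U_\delta$ is traversed exactly once by exactly one excursion. The paper's device is the event that $\Tc(\Ec)\cap U_\delta$ is contained in an $\eps$-tube joining two boundary arcs $[a]$, $[b]$ of $\partial U_\delta$, that its union with $[a]\cup[b]$ is connected, and that this union has a cut point separating $[a]$ from $[b]$; the nonexistence of double cut points of planar Brownian motion then rules out two crossings by one excursion, crossings by two distinct excursions, and spurious connections formed by two non-crossing strands meeting inside the tube. One then needs the sharp asymptotics $\Pb(A^\eps_{a,b})\sim \mathrm{const}\cdot\eps^2\exp(-\pi|a-b|/\eps)$ (via decomposition of the excursion into two bridges and a tube-crossing excursion, plus Poisson-kernel estimates in long trapezoids) so that configurations with more excursions or extra tube visits contribute strictly smaller powers of $\eps$ and can be discarded in the limit; this is what actually extracts $\Pb(\Nc=1)f(a,b)$ from the law of the trace. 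Finally, for $n$ pairs whose chords cross, a single tube event no longer isolates one pairing: each crossroad of two tubes admits three local connection patterns, and the resulting system is resolved in the paper by a double induction, over the number $n$ of excursions reaching $U_\delta$ and over the number of crossings among the $n$ chords, using that the fully-crossing pattern is the unique one reproducing the target pairing while all others produce pairings with strictly fewer crossings, hence already known. None of this inductive disentangling appears in your proposal, so as it stands the argument has a genuine gap at its central step.
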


Let us already say a few words about the strategy of the proof: For all positive $\delta$, let us denote by $\Ec_\delta$ be the finite 
random collection of excursions in $\Ec$ that intersect  the smaller disk $U_\delta := (1- \delta) \Ub$.
For each $E\in\Ec_\delta$, one can define the pair $\{ x', y'\}$ on $\partial U_\delta$  consisting of the first and last points
on the boundary of $U_\delta$ that are visited by an oriented version of $E$. Let us denote by $\Cc_\delta$ the collection of all these pairs. 

Let us now explain why in order to prove Proposition~\ref {main theorem}, it will suffice to show the following lemma: 
\begin{lemma}\label{main lemma}
For all $\delta >0$, the law of $\Cc_\delta$ is determined by the law of the trace of $\Ec$.
\end{lemma}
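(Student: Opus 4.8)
The goal is to recover the law of $\Cc_\delta$ (pairs of first/last hitting points on $\partial U_\delta$) from the law of the trace $\Tc(\Ec)$. The first observation is that the trace $\Tc(\Ec)$, restricted to the closed disk $\overline{U_\delta}$, can be decomposed according to connected components: since $\Ec_\delta$ is almost surely finite, the portion of the trace inside $\overline{U_\delta}$ is a finite union of excursion traces (the pieces of the $E_i \in \Ec_\delta$ between their first and last visits to $\partial U_\delta$, together with possible further incursions, but all within $\overline{U_\delta}$), and I want to argue that from the trace alone I can read off, for each excursion crossing $U_\delta$, the pair of endpoints on $\partial U_\delta$. The subtlety is that distinct excursions may have traces that intersect inside $U_\delta$, so the connected components of $\Tc(\Ec)\cap \overline{U_\delta}$ do not directly correspond to individual excursions. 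To get around this I would not work with a fixed $\delta$ but with a whole range of radii: for $\delta' $ slightly larger than $\delta$ (i.e.\ $U_{\delta'}\subset U_\delta$), consider the annulus $\overline{U_\delta}\setminus U_{\delta'}$ and look at how $\Tc(\Ec)$ crosses this annulus.

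\textbf{Key steps.} First, I would make precise the claim that knowing the law of $\Tc(\Ec)$ is the same as knowing the law of the family $\bigl(\Tc(\Ec)\cap K\bigr)$ for all compact $K\subset \Ub$, and in particular for $K=\overline{U_r}$ for all $r<1$; this is a routine measure-theoretic point (the trace is a random compact set and intersection with a fixed compact is a measurable operation, and the law of a random compact set is determined by the joint law of its intersections with a countable basis of compacts). Second, and this is the heart of the matter, I would show that $\Cc_\delta$ is a measurable function of $\Tc(\Ec)$ itself — not just of its law — by the following local-structure argument. For each excursion $E\in \Ec_\delta$, its first entry point $x'$ and last exit point $y'$ on $\partial U_\delta$ are the two points of $\partial U_\delta$ near which the trace of $E$ looks like the trace of a single Brownian path touching the circle; more usefully, for a.e.\ small $\delta''<\delta'<\delta$ (again $U_{\delta''}\subset U_{\delta'}\subset U_\delta$), each $E\in\Ec_\delta$ that reaches $U_{\delta''}$ contributes exactly one pair of ``crossing strands'' of the thin annulus $A_{\delta',\delta''}:=\overline{U_{\delta'}}\setminus U_{\delta''}$ in the sense that $\Tc(E)\cap A_{\delta',\delta''}$ has exactly two connected components that touch both boundary circles (with overwhelming probability as the annulus thins), while excursions not reaching $U_{\delta''}$ contribute none. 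Then the first/last points on $\partial U_\delta$ are recovered by following these strands outward from $\partial U_{\delta'}$ to $\partial U_\delta$, using connectedness of $\Tc(\Ec)$ inside the larger annulus. Third, I would take $\delta''\to 0$ (or rather exploit that $\Ec_\delta$ is finite, so all but finitely many excursions are eventually excluded, and every $E\in\Ec_\delta$ does reach $U_{\delta''}$ for $\delta''$ small enough) and check that the reconstruction stabilizes, giving $\Cc_\delta$ as a measurable function of $\Tc(\Ec)$; consequently the law of $\Cc_\delta$ is determined by the law of $\Tc(\Ec)$.

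\textbf{Main obstacle.} The delicate point is the strand-counting step: I must rule out, or rather handle on a negligible set, the configurations where two distinct excursions enter and exit the thin annulus in an intertwined way that merges their traces into a single connected component spanning the annulus, and also the configurations where a single excursion crosses the thin annulus more than the ``generic'' number of times because of back-and-forth fluctuations near the circle. Controlling this requires quantitative estimates on Brownian excursions: the probability that a single excursion makes more than $k$ crossings of an annulus of modulus $\eta$ decays in $k$, and the probability that two independent excursions with endpoints at macroscopic separation have traces meeting inside a fixed small annulus is small — both standard but needing care to assemble uniformly over the (finitely many, but with random, possibly large count) excursions in $\Ec_\delta$. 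The cleanest route is probably to fix $\delta$, condition on $\Ec_\delta$ and on the pairs in $\Cc_\delta$ being pairwise ``well separated'' at some scale, prove the reconstruction works on that event with probability tending to $1$ as the annulus thins, and then remove the separation assumption by a further limiting argument; alternatively, one phrases everything in terms of $\liminf$/$\limsup$ over a sequence $\delta''_n\to 0$ so that the exceptional sets are killed in the limit.
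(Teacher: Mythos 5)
Your proposal attempts something strictly stronger than what the paper proves: you try to exhibit $\Cc_\delta$ as an almost-sure measurable function of the realized trace $\Tc(\Ec)$, whereas the paper only identifies the \emph{law} of $\Cc_\delta$ from the \emph{law} of the trace, by computing the $\eps\to 0$ asymptotics of probabilities of trace-measurable ``tube events'' $A^\eps_{a,b}$ (trace confined to an $\eps$-tube crossing $U_\delta$, connected, and possessing a cut point) and then running a double induction over the number of excursions reaching $U_\delta$ and the number of crossings of the tube configuration. The reason the paper takes this indirect route is precisely that the pathwise reconstruction you propose runs into obstacles that your argument does not overcome. The key step of your plan --- that each excursion reaching the inner disk contributes ``exactly two connected components touching both boundary circles, with overwhelming probability as the annulus thins'' --- is false, and in fact backwards: the number of disjoint crossings of an annulus by a planar Brownian path tends to infinity almost surely as the annulus thins (the radial part is a time-changed one-dimensional diffusion, and its number of upcrossings of a shrinking interval blows up), so the probability of seeing exactly two spanning strands tends to $0$, not to $1$.

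Even setting that aside, two further gaps remain. First, the paper stresses that in the motivating situation a given excursion typically intersects infinitely many others and the finitely many excursions of $\Ec_\delta$ generically intersect one another inside $U_\delta$; once two traces meet, their strands lie in the same connected component of $\Tc(\Ec)$ in \emph{every} annulus containing the intersection point, so no thinning of the annulus and no ``following of strands by connectedness'' can tell you which strand belongs to which excursion, nor which two endpoints on $\partial U_\delta$ should be paired together. This pairing problem is exactly what the paper's cut-point conditions and crossing/connecting-configuration combinatorics are designed to resolve, and only at the level of laws. Second, you give no criterion for identifying the \emph{first and last} hitting points of $\partial U_\delta$ from the unordered trace: the intersection of a single excursion's trace with $\partial U_\delta$ is typically an infinite closed set, and the claim that the trace ``looks like a single Brownian path touching the circle'' near $x'$ and $y'$ but not near its other contact points is not substantiated. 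As it stands the proposal does not yield a proof; if you want to salvage the spirit of your approach you would have to retreat to statements about probabilities of well-chosen trace events rather than pathwise identification, which is essentially what the paper does.
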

Indeed, suppose that this lemma holds. If we know the law of the trace of $\Ec$, then we know the law of $\Cc_{1/n}$ for all $n$. There can only be at most countably many $\eps>0$ such that the probability that there exists $\{x,y\}$ in $\Cc$ with $d(x,y)=\eps$ is positive. Hence, by continuity of the excursions, for almost any $\eps>0$, the sets $\{\{ x,y\}\in\Cc_{1/n} \, : \, |x-y| >\eps\}$ converge almost surely (hence in law) 
as $n\to\infty$ to the set $\{\{ x,y\}\in \Cc\, :  \, |x-y| > \eps \}$. This means that the law of $\Cc$ can be recovered.

The strategy of the proof of Lemma~\ref {main lemma} will be (for each fixed $\delta$), to determine the law of 
$\Cc_\delta 1_{ {\mathcal N}_\delta = n}$ inductively over $n$, where ${\mathcal N}_\delta$ denotes the number of excursions in $\Ec_\delta$: 
In Section~\ref {S2}, we will introduce and study some special events that are measurable with respect to the trace $\Ec$, and that loosely speaking 
impose that the part of $\Ec$ that is at distance greater than $\delta$ from the boundary of the disk does stay in a very narrow tube that crosses 
the disk $(1- \delta) \Ub$ (roughly speaking a tube is a small neighborhood of a segment $[x,y]$ where $x$ and $y$ are on the circle of radius $1- \delta$ around 
the origin), and that one excursion does indeed cross the entire tube. We will estimate precisely the asymptotics of their probabilities when the width of the tube vanishes, which 
in turn will allow us to determine the law of $\Cc_\delta 1_{ {\mathcal N}_\delta = 1}$ (in Section~\ref{S3}).
Sections~\ref{S:Two pairs} and~\ref {S4} are devoted to the induction over $n$, which is mostly based on similar ideas, studying the asymptotic probabilities 
of events that $n$ such given tubes are traversed, when the widths of these $n$ tubes vanish. 
We will first focus on non-crossing configurations of tubes i.e., such that no two tubes intersect, and we will then (again inductively) deduce the general case. 
Some simple combinatorial considerations about how several narrow tubes can be traversed by excursions will enable to conclude.

\subsection* {A consequence using earlier work}

The motivation for the present work comes from our paper \cite {Qian-Werner} about Brownian loop-soup cluster decompositions 
and their relation to the conformal loop ensembles. 
This also explains why we focus here on the two-dimensional case (note however that the three-dimensional case is actually easier because 
the issue about the crossing configurations does not arise; the arguments 
of part of this paper can be directly adapted for that case. In higher dimensions, since Brownian excursions are simple paths, the statement is immediate). 

Let us first briefly survey some relevant features from earlier papers (for more references, see \cite {Qian-Werner}) in order to state 
the main consequence that we draw from Proposition~\ref{main theorem}: There exists a natural conformally invariant measure $\mu$ 
on unrooted Brownian loops in the unit disk introduced in \cite {MR2045953}, and for each positive $c$, when one samples a Poisson point process of such loops with intensity exactly $c \mu$, one obtains the so-called {\em Brownian loop-soup} with intensity $c$ (see again \cite {MR2045953}). 

When $c \le 1$, it turns out that
the union of all these loops can be decomposed into infinitely many connected components called loop-soup clusters \cite {MR2023758, MR2979861} and 
the outer boundary of the outermost clusters form what is called a conformal loop ensemble of parameter $\kappa$ in $(8/3, 4]$, where $\kappa$ is some 
explicit function of $c$. 
Let us consider now a Brownian loop-soup with intensity $c \le 1$ in $\Ub$ and choose  
 a given point, say the origin, in $\Ub$. Then, this point will be almost surely surrounded by a CLE$_\kappa$ loop $\partial$ (which is the outer boundary 
of the outermost origin-surrounding cluster $K$ of Brownian loops). Let us denote by $O_\partial$ the simply connected domain encircled by $\partial$. 
The paper  \cite {Qian-Werner} is describing aspects of the conditional distribution of the loop-soup given $\partial$. In particular, the 
Brownian loops inside of $\overline O_\partial$ can be decomposed into two conditionally independent parts: 
(1) The set of Brownian loops in $O_\partial$ that is distributed as a Brownian loop-soup in $O_\partial$. (2) The set of loops that touch $\partial$. 

Note that each loop 
in (2) can be decomposed into a collection of excursions away from $\partial$. One can then map this conformally onto the unit disk via the 
conformal transformation $\phi_\partial$ from $O_\partial$ onto $\Ub$ such that $\phi_\partial (0) = 0$ and $\phi_\partial'(0)$ is a positive real number. 
In this way, one obtains a random collection 
of excursions $\Ec^\partial$ in the unit disk and (see \cite {Qian-Werner}) its law  is invariant under any M\"obius 
transformation of the unit disk. 
As we will explain in Section~\ref {Sfinal}, the results of \cite {Qian-Werner} combined with elementary observations on the Brownian loop-measure do imply that in fact, 
this set of excursions is necessarily of the type $\Ec$ described above. Hence, Proposition~\ref {main theorem} shows that its law is in fact determined by the law of 
its trace. 

The loop-soup with intensity $c=1$ turns out to be very special:  It possesses for 
instance nice resampling properties \cite {MR3618142}, and is very closely related to the GFF in $\Ub$. 
Indeed, the properly renormalized occupation time measure of the union of all these loops turns out to be distributed as the 
(properly defined) square of the GFF \cite {MR2815763}. As shown in \cite {Qian-Werner} (building on the results of \cite {MR3502602,Lupu}), this CLE$_4$ can also be viewed as the collection of outermost 
level-lines (in the sense developed by \cite {MR2486487, MS}) of the Gaussian free field whose square is the occupation time of the loop-soup. 

In that special case, we did show in \cite {Qian-Werner} (building on these relations to the GFF) 
that the law of the trace of $\Ec^\partial$ is identical to the law of the trace of a certain Poisson point process
${\mathcal P}$ of Brownian excursions in $\Ub$.
In particular, the intensity measure on the set of ordered pairs of starting points on the unit circle of this process ${\mathcal P}$ is 
given by 
\begin{align*}
 \frac {d\lambda(x) d\lambda (y)}{4 | x-y|^{2}}, 
\end{align*}
where $\lambda$ is Lebesgue measure on the unit circle (note that, up to a multiplicative constant, this is the only possible measure on 
pairs of points that is invariant under M\"obius transformations -- one important feature of that result is actually the value of the constant; here this is the constant 
such that when restricted to end-points on the half circle, the outer boundary of the Poisson point process of excursions with this intensity 
does create a restriction sample of exponent $1/4$, see \cite {MR2178043}). 
Combining this with Proposition~\ref {main theorem} then implies immediately the following fact: 

\begin{corollary}\label{main corollary}
For the loop-soup with $c=1$, the law of the collection $\Ec^\partial$ is exactly that of the Poisson point process ${\mathcal P}$. 
\end{corollary}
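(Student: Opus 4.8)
The plan is to derive Corollary~\ref{main corollary} from Proposition~\ref{main theorem} together with the trace identity obtained in \cite{Qian-Werner}. The only real work is to check that both ${\mathcal P}$ and $\Ec^\partial$ are point processes of (unoriented) Brownian excursions of exactly the type $\Ec$ treated in the introduction, i.e.\ that (a) conditionally on their point process of endpoint pairs the excursions are independent with the laws $P_{x_i,y_i}$, and (b) the almost sure summability $\sum_i|x_i-y_i|^2<\infty$ holds. Once this is done, the corollary is immediate: by \cite{Qian-Werner} the traces ${\mathcal T}(\Ec^\partial)$ and ${\mathcal T}({\mathcal P})$ have the same law, so Proposition~\ref{main theorem} (which applies to each, since both are of type $\Ec$) forces $\Ec^\partial$ and ${\mathcal P}$ to have the same law. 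Orientations play no role: the intensity of ${\mathcal P}$ is symmetric and the loops of the loop-soup carry no relevant orientation, so we work throughout with the unoriented versions, as in the rest of the paper.

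For ${\mathcal P}$ the verification is routine. Property (a) holds by the very definition of a Poisson point process of Brownian excursions with a given intensity on pairs of endpoints, and property (b) holds because the sum in question has finite expectation:
\[
\Ef\Bigl[\sum_i |x_i-y_i|^2\Bigr] \;=\; \int_{\partial\Ub}\int_{\partial\Ub} |x-y|^2\,\frac{d\lambda(x)\,d\lambda(y)}{4\,|x-y|^2} \;=\; \frac{\lambda(\partial\Ub)^2}{4} \;=\; \pi^2 \;<\;\infty ,
\]
so that $\sum_i|x_i-y_i|^2<\infty$ almost surely.

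The substantive point, to be carried out in Section~\ref{Sfinal}, is that $\Ec^\partial$ is also of type $\Ec$. Starting from the description recalled above (and proved in \cite{Qian-Werner}) of the loops of the $c=1$ loop-soup inside $\overline O_\partial$, the loops that touch $\partial$ decompose into excursions away from $\partial$ inside $O_\partial$, and mapping by $\phi_\partial$ turns these into the excursions of $\Ec^\partial$ with endpoint pairs forming some point process $\Cc^\partial$ on $\partial\Ub$. Applying the strong Markov property of planar Brownian motion (equivalently, the restriction/Markov property of the Brownian loop measure) along the successive hitting times of $\partial$ shows that, conditionally on $\Cc^\partial$, these excursions are independent Brownian excursions with the laws $P_{x_i,y_i}$, which is property (a). For property (b), note that by the discussion after Proposition~\ref{main theorem} it is equivalent to the local finiteness of $\Ec^\partial$; and for fixed $\eps>0$ an excursion of $\Ec^\partial$ of diameter larger than $\eps$ in $\Ub$ is the $\phi_\partial$-image of an excursion reaching a fixed compact subset $K_\eps\subset O_\partial$, hence comes from a loop of the loop-soup that meets both $\partial$ and $K_\eps$; there are almost surely only finitely many such loops, and each contributes only finitely many excursions reaching $K_\eps$, so $\Ec^\partial$ is locally finite.

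With both processes placed within the class covered by Proposition~\ref{main theorem}, the corollary follows as stated. I expect the main obstacle to be precisely the conditional-independence claim for $\Ec^\partial$ in Section~\ref{Sfinal}: one must be careful that the excursions extracted from a single loop touching $\partial$ are genuinely conditionally independent given their endpoints (so that, together across all loops, $\Ec^\partial$ has the exact conditional structure of $\Ec$), which requires a clean application of the Markov property to the conditional law of the loop-soup given $\partial$ obtained in \cite{Qian-Werner}; the summability estimate and the final invocation of Proposition~\ref{main theorem} are then purely formal.
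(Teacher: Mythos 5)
Your overall route is the paper's: verify that both ${\mathcal P}$ and $\Ec^\partial$ fall in the class of point processes $\Ec$ covered by Proposition~\ref{main theorem}, invoke the equality in law of the traces from \cite{Qian-Werner}, and conclude. The verification for ${\mathcal P}$ (Campbell's formula giving $\Ef[\sum_i|x_i-y_i|^2]=\pi^2$) and the local finiteness of $\Ec^\partial$ (only finitely many loops reach a given compact subset of $O_\partial$, each contributing finitely many excursions to it) are both correct and are exactly what the paper does.

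The gap is in the one substantive step, the conditional independence of the excursions of $\Ec^\partial$ given their endpoint pairs, which you propose to obtain by ``applying the strong Markov property of planar Brownian motion along the successive hitting times of $\partial$.'' This does not work as stated: $\partial$ is the outer boundary of the outermost origin-surrounding cluster, hence a functional of the entire loop-soup configuration --- including the very loops whose excursions away from $\partial$ you are trying to describe. The hitting times of $\partial$ are not stopping times in any usable sense, and conditioning on $\partial$ is not a Markov-type conditioning; a priori it could bias the law of the pieces of the loops away from $\partial$. The paper's Lemma~\ref{lemma6} circumvents this with a different idea: one decomposes the loops along two \emph{deterministic} concentric circles $L_1,L_2$ into outer excursions and inner bridges (for which the conditional independence of the bridges given the excursions and their pairing is a genuine consequence of the Markov property of the loop measure), and then one shows that on the event $\{\partial\subset A\}$, for an annulus $A$ close to the unit circle, \emph{resampling} the inner bridges almost surely does not change $\partial$ --- this uses the local finiteness and pairwise disjointness of loop-soup clusters from \cite{MR2979861}. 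Since the conditioned curve is invariant under the resampling, the conditional law of the bridges given $\partial$ is still the independent-bridge law; two successive limits ($r\to1$, then $r_1\to1$) then transfer this statement to the excursions away from $\partial$ itself. Without this resampling-invariance argument (or a substitute for it), your property (a) for $\Ec^\partial$ is unproven, and it is the only non-formal ingredient of the corollary.
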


The techniques that 
we used to derive the law of the trace of $\Ec$ in this $c=1$ case were based on Dynkin's isomorphism theorem, 
that provides information on the law of the cumulative occupation times, so that 
the present paper can be used in other contexts where Dynkin's theorem applies. 
For instance, the arguments in the present paper and in \cite {Qian-Werner} can be adapted or go through without further ado  
in order to extend those results to the multiply connected settings (i.e. when one replaces $\Ub$ by a multiply connected domain); see  \cite {ALS} for some aspects of the 
GFF/loop-soup aspects in the non-simply connected setting.  

Note that as explained in \cite {Qian-Werner}, when $c < 1$, one does not expect the law of $\Ec^\partial$ to be that of a Poisson point process (the different excursion should ``interact''). It 
would be nevertheless interesting to understand it better.

\section{Preliminaries} \label{S2}

With the exception of Section~\ref {Sfinal}, 
the remainder of the paper is devoted to the proof of Lemma~\ref {main lemma} and will involve neither Brownian loop-soups nor CLE considerations. 

In this section, we will review some simple facts about oriented excursions and their decomposition  (see \cite{MR2045953} for more details).

Let $D$ be a bounded open domain. We will denote by $G^D (x,y)$ the Green's function in $D$, and we use the normalization so that 
$G^D (x,y) dy $ is the density of the expected occupation time measure for a Brownian motion started from $x$ and killed upon exiting $D$. 
Then, for each $x \not= y$ in $\overline D$, one can define a natural finite measure $ \mu^D(x,y)$ on Brownian paths from $x$ to $y$ in $D$. There are three cases, depending on whether $x$ or $y$ are in $D$ or on the boundary of $D$.

\begin{itemize}
\item For any $x,y\in D$, one can construct the $\mu^D(x,y)$ to be the natural measure on Brownian bridges in $D$ from $x$ to $y$ with total mass $G^D(x,y)$.
This bridge measure is conformally invariant: For any conformal map $f$ from $D$ onto some other domain $D'$, we have 
\begin{align}\label{eq:conformal-inv}
\mu^{D'}(f(x), f(y))=f\circ \mu^D(x,y).
\end{align}
We will refer to this measure renormalized to be a probability measure as the \emph{bridge} probability measure from $x$ to $y$ in $D$. 
\item For $x,y \in \partial D$ such that $\partial D$ is smooth near $x$ and $y$, we define the excursion measure 
$$\mu^D(x,y)=\lim_{\eps\to 0}\frac{1}{\eps^2}\mu^D(x+\eps \vec{n}_x, y+\eps \vec{n}_y)$$
where (here and in the sequel) $\vec{n}_x$ is inwards pointing normal vector to  $\partial D$ at $x$.
Note that this excursion measure is (typically) also not a probability measure, but it has finite mass (its total mass is the boundary Poisson kernel $K^D(x,y)$). 
This time, it is conformally covariant: For any conformal map $f$ from $D$ onto some other domain $D'$ such that $f'(x)$ and $f'(y)$ exist, we have $$|f'(x)| |f'(y)|\mu^{D'}(f(x), f(y))=f\circ \mu^D(x,y).$$
We will refer to this measure renormalized to be a probability measure to be the \emph{excursion} probability measure. This probability measure is then conformally invariant, and 
when $D = \Ub$, it is exactly the excursion probability measure $P^{x,y}$ mentioned in the introduction. 
\item For $x \in D$ and $y \in \partial D$ such that $\partial D$ is smooth in the neighborhood of $z$, we define the measures 
$$\mu^D(x,y)=\lim_{\eps\to 0}\frac{1}{\eps}\mu^D(x, y+\eps \vec{n}_y)$$
and 
$$\mu^D(y,x)=\lim_{\eps\to 0}\frac{1}{\eps}\mu^D(y+\eps \vec{n}_y, x).$$
The latter one is of course obtained by time-reversal of the former. 
Their  total mass $H^D (x,y)$ is now the density at $y$ of the harmonic measure seen from $x$, and  it is conformally covariant: For any conformal map $f$ from $D$ onto some other domain $D'$ such that  $f'(y)$ exists, we have $$ |f'(y)|\mu^{D'}(f(x), f(y))=f\circ \mu^D(x,y).$$
We will refer to these measures (and their renormalized probability measures) as \emph{brexcursions}. 
\end{itemize}
For all these measures, we will omit the superscript $\Ub$ when we will be working in the unit disk $\Ub$ (in other words, we will write $\mu$ instead of $\mu^\Ub$).

It is easy to derive path decompositions of these Brownian paths defined under all these measures (see for instance  \cite{MR2045953}). 
Let us now state here one such  decomposition  that will be relevant for our purpose (and that can be easily proved using the same arguments as in  \cite{MR2045953}).
We will in fact not only be using this particular decomposition, but this one illustrates well how things work. 

We are interested in boundary-to-boundary excursions in the unit disk $\Ub$ that do intersect $U= U_\delta:=(1-\delta)\Ub$ for some given $\delta \in (0,1)$. 
Let $A= A_\delta$ be the open annulus $\Ub\setminus \overline U$. 
Let $\tilde \mu_{x,y}$ be the measure $\mu_{x,y}$ restricted to the set of excursions that intersect $U$ (which is therefore the difference between $\mu_{x,y}$ and 
 $\mu_{x,y}^{A}$).
Then we can decompose those excursions with respect to their first and last visited points $x'$ and $y'$ in $\overline U$ and one can view the excursion as a concatenation of an excursion from $x$ to $x'$ in $A$, a bridge from $x'$ to $y'$ in $\Ub$ and an excursion from $y'$ to $y$ in $A$.  More precisely: 
\begin{lemma}\label{lem:decomp}
The measure $\tilde \mu_{x,y}$ can be decomposed as follows:
\begin{align*}
\tilde \mu_{x,y}=\int_{\partial U_\delta}\int_{\partial U_\delta} (\mu^{A}_{x, x'} \oplus \mu_{x', y'} \oplus \mu^A_{y',y})  \, d \lambda(x') \, d \lambda( y')
\end{align*}
where (here and in the sequel) $d\lambda$  denotes the (one-dimensional) Lebesgue measure (on $\partial U_\delta$).
Here the measure $\mu^{A}_{x, x'} \oplus \mu_{x', y'} \oplus \mu^A_{y',y}$ corresponds to measure on paths obtained by the concatenation of the three different pieces corresponding to
the three measures, when defined under the product measure.  
\end{lemma}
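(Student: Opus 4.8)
The plan is to obtain the decomposition of $\tilde\mu_{x,y}$ from the strong Markov property of Brownian motion applied at two successive times: the first hitting time of $\overline{U_\delta}$ and the last exit time from $\overline{U_\delta}$. More precisely, I would start from the pre-limit quantity $\mu_{x+\eps\vec n_x,\, y+\eps\vec n_y}$, which (after normalization) is the law of a Brownian motion started near $x$ and conditioned to exit $\Ub$ near $y$; its restriction to paths hitting $\overline{U_\delta}$ corresponds to $\tilde\mu_{x+\eps\vec n_x,\,y+\eps\vec n_y}$. For such a path, let $\tau = \inf\{t: B_t\in\overline{U_\delta}\}$ and $\sigma = \sup\{t: B_t\in\overline{U_\delta}\}$; these are both finite on this event, and $X' := B_\tau$, $Y' := B_\sigma$ lie on $\partial U_\delta$ (the path cannot first enter or last leave through the interior, and by smoothness of $\partial U_\delta$ the hitting/exit points are a.s.\ on the circle). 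The path then splits canonically as: a piece from $x+\eps\vec n_x$ to $X'$ that stays in $A=A_\delta$ until time $\tau$, a middle piece from $X'$ to $Y'$ living in $\Ub$ between times $\tau$ and $\sigma$, and a final piece from $Y'$ to $y+\eps\vec n_y$ that stays in $A$ after time $\sigma$.

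Next I would identify the conditional laws of these three pieces given $(X',Y')=(x',y')$. The strong Markov property at $\tau$ says that, conditionally on $X'=x'$, the initial piece is a Brownian motion from $x+\eps\vec n_x$ killed on hitting $\partial U_\delta$ and conditioned to hit it at $x'$ — i.e.\ it is governed by $\mu^{A}_{x+\eps\vec n_x,\,x'}$ after the appropriate disintegration — while the remainder is independent of it. Applying the Markov property again at time $\sigma$ (using that $\sigma$ is a \emph{last} exit time, so one works with the time-reversed path from $y+\eps\vec n_y$, or equivalently uses the last-exit decomposition) shows that, conditionally on $Y'=y'$, the final piece is governed by $\mu^A_{y',\,y+\eps\vec n_y}$ and is independent of the middle piece; the middle piece, conditioned on both endpoints, is a Brownian bridge in $\Ub$ from $x'$ to $y'$, i.e.\ it is governed by $\mu_{x',y'}$. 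Matching the total masses (Green's functions and Poisson kernels factorize accordingly under these decompositions) gives
\begin{align*}
\tilde\mu_{x+\eps\vec n_x,\,y+\eps\vec n_y}=\int_{\partial U_\delta}\int_{\partial U_\delta}\bigl(\mu^{A}_{x+\eps\vec n_x,\,x'}\oplus\mu_{x',y'}\oplus\mu^A_{y',\,y+\eps\vec n_y}\bigr)\,d\lambda(x')\,d\lambda(y').
\end{align*}

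Finally I would pass to the limit $\eps\to 0$. By definition $\tilde\mu_{x,y}=\lim_{\eps\to 0}\eps^{-2}\tilde\mu_{x+\eps\vec n_x,\,y+\eps\vec n_y}$, and on the right-hand side the factor $\eps^{-2}$ splits as $\eps^{-1}\cdot\eps^{-1}$, with $\eps^{-1}\mu^A_{x+\eps\vec n_x,\,x'}\to\mu^A_{x,x'}$ and $\eps^{-1}\mu^A_{y',\,y+\eps\vec n_y}\to\mu^A_{y',y}$ being exactly the brexcursion limits recalled earlier in the excerpt (applied in the domain $A$, whose boundary is smooth near $x$ and $y$). The middle factor $\mu_{x',y'}$ does not involve $\eps$. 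Uniform integrability over $x',y'\in\partial U_\delta$ — which holds because the Poisson kernels $K^A(x+\eps\vec n_x,\cdot)$ and $K^A(\cdot,y+\eps\vec n_y)$, suitably renormalized, converge uniformly on the compact circle $\partial U_\delta$ (it is bounded away from $x,y$) — lets one exchange limit and integral, yielding the stated formula.

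The main obstacle is the second application of the Markov property, at the \emph{last} exit time $\sigma$ from $\overline{U_\delta}$: since $\sigma$ is not a stopping time, one cannot invoke the strong Markov property directly and must instead argue via a last-exit (Williams-type) decomposition or via time-reversal of the excursion, taking care that the reversed path of $\mu_{x+\eps\vec n_x,\,y+\eps\vec n_y}$ is $\mu_{y+\eps\vec n_y,\,x+\eps\vec n_x}$ and that the resulting disintegration is consistent with the first one. This is precisely the point where the excerpt says the result ``can be easily proved using the same arguments as in \cite{MR2045953}'', so I would invoke that reference for the clean version of the last-exit decomposition rather than redo it; the remaining steps (conformal/covariance bookkeeping and the $\eps\to0$ limit) are routine.
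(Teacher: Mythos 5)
Your argument is correct and is exactly the decomposition the paper has in mind: it splits the excursion at the first hitting point and the last exit point of $\overline{U_\delta}$, handles the last-exit time via time-reversal of the (reversible) bridge measure, and recovers the boundary excursion measures $\mu^A_{x,x'}$, $\mu^A_{y',y}$ in the $\eps\to 0$ limit, consistently with the mass identity $\tilde K(x,y)=\int\int K^A(x,x')G(x',y')K^A(y',y)\,d\lambda\,d\lambda$. The paper itself gives no proof beyond describing this very decomposition and deferring to \cite{MR2045953}, so your write-up supplies the standard argument the authors intend.
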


\begin{figure}[h]
\centering
        \includegraphics[trim =60mm 0mm 60mm 0mm, clip,width=0.52\textwidth,page=1]{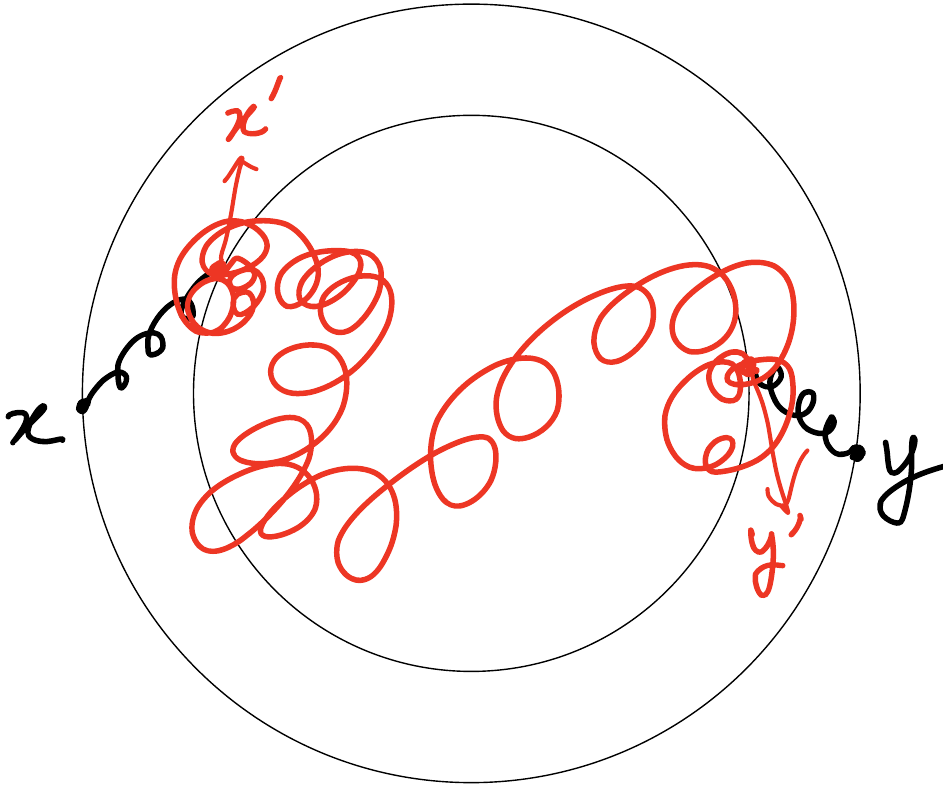}
        \caption{The decomposition of an excursion}
        \label{tube1}
    \end{figure}
    
This decomposition implies in particular the following expression for the total mass of the measure $\tilde \mu_{x,y}$:  
\begin{align*}
\tilde K(x,y) := K(x,y) - K^A (x,y) =\int_{\partial U_\delta}\int_{\partial U_\delta} K^{A} (x, x')  G (x', y') K^A (y',y) \, d \lambda( x')  \, d \lambda( y').
\end{align*}

An alternative way to phrase the lemma is to say that in order to sample a Brownian excursion $E_{x,y}$ in $\Ub$ according to the probability measure on excursions from $x$ to $y$ in $\Ub$ 
conditioned to hit $U_\delta$ (see Figure~\ref{tube1}),  one can first choose $( x', y' )\in \partial U_\delta^2$ according to the density 
$$g(x',y')=\frac {K^A(x,x')G(x',y')K^A(y',y)}{ \tilde K(x,y)}$$ with respect to the product Lebesgue measure, and then conditionally on these two points, draw independently the two excursions and the bridge according to the respective  probability measures.

\medbreak

We will use also some elementary estimates about the boundary Poisson kernel in long tubes. 
Let $\theta\in(0,\pi/2)$. Let $T=T(l)$ be a trapezoid with bottom line $[-l - \pi \cot \theta ,l+\pi\cot\theta]$ and top line $[i \pi  - l , i \pi + l]$. Let $I_1(l)$ and $I_2(l)$ be its left and right sides.

\begin{lemma}\label{trapezoid}
There exists a positive function $c_\theta$ on $(0, \pi )$  such that as $l \to \infty$, 
\begin{align*}
\frac {K^T(x_1,x_2)} {c_\theta (\Im (x_1)) c_\theta(\Im (x_2))}  \underset{l\to\infty}{\sim} \exp(-2l)
\end{align*}
uniformly with respect to  $x_1$ and $x_2$ in $I_1(l)$ and $I_2(l)$ (where here and in the sequel, $x \sim y$ means that $x$ is equivalent to $y$ i.e., that $x/y$ tends to $1$). 
\end{lemma}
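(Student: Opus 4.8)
The plan is to use the conformal covariance of the boundary Poisson kernel to reduce the statement to an explicit computation in a long rectangle, together with a stability statement for the uniformizing map near the two slanted ends of $T(l)$. Let $\psi_l$ be the (unique) conformal map from $T(l)$ onto a rectangle $R_l:=(0,W_l)\times(0,\pi)$ that sends $I_1(l)$ to the left side $\{0\}\times(0,\pi)$, $I_2(l)$ to the right side $\{W_l\}\times(0,\pi)$, and hence the bottom and top of $T(l)$ to those of $R_l$; thus $W_l/\pi$ is the conformal modulus of $T(l)$ as a quadrilateral with distinguished pair of sides $(I_1(l),I_2(l))$. By the conformal covariance recalled in Section~\ref{S2},
\begin{equation*}
K^{T(l)}(x_1,x_2)=|\psi_l'(x_1)|\,|\psi_l'(x_2)|\,K^{R_l}\bigl(\psi_l(x_1),\psi_l(x_2)\bigr).
\end{equation*}
Expanding the Green's function of $R_l$ in the eigenbasis $\{\sin(ny)\}_{n\ge1}$ of the cross-section $(0,\pi)$ gives, for a point $iy_1$ on the left side and a point $W_l+iy_2$ on the right side,
\begin{equation*}
K^{R_l}(iy_1,W_l+iy_2)=\frac{4}{\pi}\sum_{n\ge1}\frac{n\,\sin(ny_1)\sin(ny_2)}{\sinh(nW_l)}\ \underset{l\to\infty}{\sim}\ \frac{8}{\pi}\,\sin(y_1)\sin(y_2)\,e^{-W_l},
\end{equation*}
uniformly in $y_1,y_2\in(0,\pi)$: since $|\sin(ny)|\le n|\sin y|$, the terms with $n\ge2$ contribute a relative error $O(e^{-W_l})$. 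It remains to understand $W_l$ and the boundary behaviour of $\psi_l$ along $I_1(l)$ and $I_2(l)$.

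For this, I would recenter the two ends of $T(l)$. Translating by $+l$, the domains $T(l)+l$ increase to a limiting domain $D^-_\theta$ -- a half-infinite strip of width $\pi$ whose left end is cut off by a right-triangular bevel, with a corner of angle $\theta$ at the tip and a corner of angle $\pi-\theta$ where the bevel meets the top side -- and $I_1(l)+l$ is the \emph{fixed} hypotenuse of this bevel. A Carath\'eodory-type argument then shows that $w\mapsto\psi_l(w-l)$ converges on $D^-_\theta$, locally uniformly and with its derivatives, to the conformal map $\psi^-_\infty$ from $D^-_\theta$ onto the half-strip $(0,\infty)\times(0,\pi)$ that sends the hypotenuse to the left edge. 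Two consequences follow. First, since a conformal map between the far ends of two strips of width $\pi$ that share their two horizontal boundary lines is asymptotic to a real translation, $\psi^-_\infty(w)=w+\alpha(\theta)+o(1)$ as $\Re w\to\infty$ for some real constant $\alpha(\theta)$; combining this (in quantitative form) with the reflection symmetry $z\mapsto-\bar z$ of $T(l)$, which forces $\psi_l$ to send the symmetry axis $\{\Re z=0\}\cap T(l)$ to the mid-line $\{\Re\zeta=W_l/2\}$ of $R_l$, one gets $W_l=2l+\gamma(\theta)+o(1)$ with $\gamma(\theta)=2\alpha(\theta)$, and hence $e^{-W_l}\sim e^{-\gamma(\theta)}e^{-2l}$. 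Second, parametrising $x_1\in I_1(l)$ by its height $s:=\Im(x_1)\in(0,\pi)$ -- so that $x_1+l$ lies on the fixed hypotenuse -- the convergence above, once upgraded to be uniform up to this side, shows that $|\psi_l'(x_1)|\,\sin\bigl(\Im\psi_l(x_1)\bigr)$ converges, uniformly over $x_1\in I_1(l)$, to a function $\tilde c_\theta(s)$ of $s$ and $\theta$ alone that is positive on $(0,\pi)$; by symmetry the same holds along $I_2(l)$, with the same $\tilde c_\theta$. Plugging everything into the two displays above gives
\begin{equation*}
K^{T(l)}(x_1,x_2)\ \underset{l\to\infty}{\sim}\ \tfrac{8}{\pi}\,e^{-\gamma(\theta)}\,\tilde c_\theta(\Im(x_1))\,\tilde c_\theta(\Im(x_2))\,e^{-2l}
\end{equation*}
uniformly, which is the assertion with $c_\theta(s):=\bigl(\tfrac{8}{\pi}\,e^{-\gamma(\theta)}\bigr)^{1/2}\tilde c_\theta(s)$.

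The main obstacle is the uniformity in the two convergences above. Upgrading the interior (Carath\'eodory) convergence of $w\mapsto\psi_l(w-l)$ to one that is uniform up to the side $I_1(l)$ -- and in particular up to its two endpoints, which are polygonal corners where $|\psi_l'|$ vanishes or blows up like a power of the distance to the corner while $\sin(\Im\psi_l)$ vanishes at exactly the matching rate, so that the product converges (uniformly, including at the endpoints, where both sides vanish) -- requires local estimates at those corners, exploiting that for $l$ large the recentered domains coincide with $D^-_\theta$ in a fixed neighbourhood of $I_1(l)$, so that one may, for instance, reflect across $I_1(l)$. Pinning down the $o(1)$, rather than merely $O(1)$, error in $W_l=2l+\gamma(\theta)+o(1)$ is of the same nature -- it amounts to the standard but delicate fact that the conformal modulus of a long quadrilateral is additive up to an exponentially small error. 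A direct Schwarz--Christoffel computation of the modulus of the trapezoid and of $\psi_l$ near the ends provides an alternative route to both points.
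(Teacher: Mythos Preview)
Your approach is correct but takes a genuinely different route from the paper's. You uniformize the whole trapezoid by a single conformal map $\psi_l$ onto a rectangle of width $W_l$, use the eigenfunction expansion there, and are then left with two analytic tasks: (a) the modulus estimate $W_l=2l+\gamma(\theta)+o(1)$, and (b) the uniform convergence of $|\psi_l'|\sin(\Im\psi_l)$ along the slanted sides, including near the corners where $|\psi_l'|$ has power-law singularities. You correctly isolate these as the delicate points, and both are indeed standard (additivity of moduli for long quadrilaterals up to exponentially small errors; local Schwarz--Christoffel analysis at polygonal corners), though they do require genuine work to make uniform.

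The paper avoids both of these entirely. Instead of mapping $T(l)$ to a rectangle, it inserts two fixed vertical segments $L_1,L_2$ at distance $1$ from the slanted ends and decomposes an excursion from $x_1$ to $x_2$ at its last visit to $L_1$ before its first visit to $L_2$. This factors $K^T(x_1,x_2)$ as an integral over $(y_1,y_2)\in L_1\times L_2$ of a harmonic-measure density $H^{T\setminus L_2}(y_1,x_1)$, the rectangle kernel $K^R(y_1,y_2)$, and another harmonic-measure density $H^T(y_2,x_2)$. The $e^{-2l}$ comes straight from the middle factor (the known rectangle asymptotic), so no modulus computation is needed; and the outer factors converge simply because the domains $T\setminus L_2$ and $T$, viewed near each end, stabilize to fixed half-infinite strips $T_1,T_2$. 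The function $c_\theta$ then emerges as an explicit integral of $\sin$ against a harmonic-measure density in $T_1$, and its corner behaviour is absorbed into that density without any separate analysis.

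In short: the paper's probabilistic path-decomposition is shorter and in keeping with the techniques used throughout the rest of the argument; your conformal-map approach is conceptually direct but front-loads the difficulty into the two boundary-regularity questions you identify.
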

\begin{proof}
There are many possible simple ways to derive this fact. Let us give one possibility based on a variant of the previous decomposition of Brownian excursions. 
We consider the vertical segments $L_1= (-l+1 , -l+1+ i \pi)$ and $L_2 = (l-1, l-1 + i \pi)$.
 Let  $R$ denote the rectangle with sides $L_1$ and $L_2$. Then, by considering the last visited point $y_1$ on $L_1$ before the first hitting point $y_2$ of $L_2$, one gets a  
 decomposition of the excursion into a brexcursion from $x_1$ to $y_1$ in $T \setminus L_2$, an excursion from $y_1$ to $y_2$ in $R$ and a brexcursion from $y_2$ to $x_2$ in $T$. 
 The corresponding result for the total mass of the excursion measure is then: 
 \begin{align*}
K^T(x_1,x_2)=\int_{L_1} \int_{L_2} H^{T \setminus L_2}(y_1, x_1) K^R(y_1, y_2) H^{T}(y_2,x_2) d \lambda(y_1) d \lambda(y_2).
\end{align*}
Then, we can just recall that for the Poisson kernel in long rectangles,
\begin{align*}
K^R (y_1, y_2) \underset{l\to\infty}{\sim} c \sin ( \Im (y_1) ) \sin ( \Im (y_2))  \exp(-2l)
\end{align*}
uniformly with respect  to  $ \Im (y_1) $ and $ \Im (y_2)$
(this follows for instance from the expression of the Green's kernel in the upper half-plane and conformal invariance). 
It follows readily that as $l \to \infty$,
\begin{eqnarray*}
K^T(x_1,x_2)&\sim & c  \exp(-2l)  \left( \int_{L_1} \sin ( \Im (y_1) ) H^{T_1}(y_1, x_1)   d \lambda(y_1)  \right) \\
&&  \times \left( \int_{L_2} \sin ( \Im (y_2))   H^{T_2}(y_2,x_2) d \lambda(y_2) \right) .
\end{eqnarray*}
where $T_1$ (resp. $T_2$) denote the union of $T$ with the half strip $ \{\Im(z)\in(0,\pi), \Re (z) > 0 \}$ (resp. with  the half strip $ \{\Im(z)\in(0,\pi), \Re (z) < 0 \}$). 
\end{proof}
We will in fact use the following variation of Lemma~\ref {trapezoid} that can be viewed as a direct consequence of it via the conformal covariance property of the boundary Poisson kernel. 
Suppose that for each $l$, one modifies the trapezoid $T(l)$ into a variant $T'(l)$  by just changing the left 
and right sides $I_1$ and $I_2$ into circular arcs $I_1'$ and $I_2'$ (with the same extremities as $I_1 (l)$ and $I_2 (l) $). We assume that we do this in such a way that 
these circular arcs get closer and closer to $I_1 (l)$ and $I_2 (l)$  as $l \to \infty$. 
Then we can find conformal maps that are very close to the identity which send $T'(l)$ to $T(l')$ for some $l'$ that is very close to $l$.
Then, for the same $c(\theta)$ as in Lemma~\ref {trapezoid}, one has 
\begin{align*}
\frac {K^{T'(l)} (x_1,x_2)} {c_\theta (\Im (x_1)) c_\theta(\Im (x_2))}  \underset{l\to\infty}{\sim} \exp(-2l)
\end{align*}
uniformly with respect to  $x_1$ and $x_2$ in $I_1' (l)$ and $I_2' (l)$.

\medbreak

Finally, let us now mention some features of cut points of Brownian excursions that we will also be using. These are 
exceptional points $z$ on the trace of a Brownian excursion $E$ from $x$ to $y$, such that $E \setminus \{ z \}$ is not connected.
It is known that such points are visited only once by the Brownian excursions (due to the absence of double local cut points 
on Brownian paths \cite {MR1062056} -- we will use this feature again in this paper), and that $x$ and $y$ must then be in different connected components of $E \setminus \{ z \}$. 
It is in fact not difficult to see that a Brownian excursion sampled according to $P_{x,y}$ in the unit disk
has almost surely infinitely many cut points (both near $x$ and near $y$). The set of cut points forms actually a fractal set with dimension $3/4$  \cite {MR1879851} (but we will not need this here). 
By mapping the disk onto a very thin rectangle, one can then directly deduce that if one samples a Brownian excursion onto a very long trapezoid $T(l)$ (or a very long rectangle) and chooses one point on each of its smaller boundary segments, then as $l \to \infty$ (and uniformly with respect to the choice of the boundary points), the probability that an excursion joining these two points has a cut point in any sub portion of $T(l)$ with length $\alpha l$ for any given
$\alpha \in (0,1]$ goes to $1$ as $l \to \infty$. We leave the details to the reader (in fact, it is also not difficult to show that this probability converges exponentially fast to $1$).

\section{One pair}\label{S3}

We now proceed to the proof of Lemma~\ref {main lemma} as outlined in the introduction (we use the same notation). 
From now on, the value of $\delta$ will be fixed. We will write  $U$ and ${\mathcal N}$ instead of  $U_\delta$ and ${\mathcal N}_\delta$.

Note that for any integer $n$, $\Pb({\mathcal N}=n ) > 0$ if and only if the probability 
that $\Cc$ has at least $n$ pairs of points is positive. In particular, if $\sum_{ j < n} \Pb({\mathcal N}=j ) < 1$, then  $\Pb({\mathcal N}=n ) > 0$.

Observe first that $\Pb({\mathcal N}=0)$ is the probability that the trace of $\Ec$ does not intersect $U$, and it is therefore 
 determined by the law of the trace of $\Ec$. We are from now on going to assume that this probability is not equal to $1$ (i.e., that 
 $\Ec$ is not almost surely empty), 
and the goal of the rest of this section will be to show that 
the law of $\Cc_\delta 1_{{\mathcal N} = 1 }$ can be deduced from the law of the trace of $\Ec$. 
In the next sections, 
we will then describe the steps that allows to determine the law of $\Cc_\delta 1_{{\mathcal N} = n }$ for all $n > 1$.

On the event ${\mathcal N} =1$, we denote by  $\{ x', y'\}$ the unique pair in $\Cc_\delta$. It will
also be convenient to consider the ordered pair of points obtained by assigning either order with probability $1/2$.
The conditional law of this ordered pair given ${\mathcal N} =1$ has a smooth positive and symmetric density $f(a,b)$ with respect to the Lebesgue measure 
on $\partial U^2$ (simply because it 
is the mean value of uniformly smooth densities, when one averages with respect to the law of the end-points of the excursion that actually makes it to $U$). 
So, our goal here is to recover $\Pb ( {\mathcal N} = 1 )$ as well as the density $f$ from the law of the trace of $\Ec$. 
 
For two distinct points $a,b\in \partial U$ and for $\eps>0$, let $T^\eps(a,b)$ be the open tube which is equal to  $L^\eps(a,b)\cap U$ where $L^\eps(a,b)$ is the $\eps/2$-neighborhood of the line passing through the points $a,b$ (see Figure~\ref{tube2}). 
Let $[a]$ and $[b]$ denote respectively the two arcs (for a given $a$ and $b$, they exist provided $\eps$ is small enough) of $L^\eps(a,b)\cap\partial U$
that respectively contain $a$ and $b$. 

We define $A^\eps_{a,b}$  to be the event that the following three events (i)-(ii)-(iii) do hold: 

(i) $\Tc(\Ec) \cap U \subset T^\eps(a,b)$,

(ii) $ ( \Tc(\Ec) \cap U) \cup [a] \cup [b]$ is connected,

(iii) $ ( \Tc(\Ec) \cap U) \cup [a] \cup [b]$ has a cut point disconnecting $[a]$ from $[b]$ (which means that there exists some point $z$ when one removes this point $z$, $[a]$ and $[b]$ are in different connected components 
of $ ( \Tc(\Ec) \cap U) \cup [a] \cup [b]$). 

Note that the event $A_{a,b}^\eps$ is by definition measurable with respect to the trace of $\Ec$. 
We introduced this cut point condition (iii) because it implies that 
the tube is crossed exactly once by only one excursion. 
Let us explain this: Recall (see \cite{MR1062056}) that almost surely, there are no double cut points on a planar Brownian path $B[0,T]$ when $T$ is a given deterministic time, hence almost surely, 
any cut point  $x$ of the set $B[0,T]$  (which means that $B[0,T] \setminus \{ x\}$ has two connected components) is visited only once by $B$ on $[0,T]$.  
Simple absolute continuity considerations then imply that if $B$ and $B'$ are two independent Brownian motions (started from any two given points in the plane) on some given time-intervals $[0,T]$ and $[0,T']$, 
then almost surely on the event that $B[0,T] \cup B'[0,T']$ 
is connected, any cut point of this union does belong to only one of the two sets $B[0,T]$ or $B'[0, T']$, and is visited only once by the corresponding Brownian motion. 
We can again invoke some absolute continuity arguments between portions of Brownian excursions and Brownian motions (and the fact that there are only countable many excursions in the Poisson point process of excursions) 
to readily deduce that  (iii) excludes the possibility that the 
 tube is crossed more than once by the same Brownian excursion (as the two crossings would have to visit this cut point). Similarly, (iii) also excludes the possibility 
 that the tube is crossed by more than one excursion (otherwise, both crossings would have to go through this same cut point). Finally,  (iii)  excludes the possibility 
 that there is no crossing at all of the tube by one excursion 
(this would mean that one portion of an excursion entering and leaving the tube from $[a]$ and another portion 
of an excursion entering and leaving from $[b]$ do intersect somewhere in the tube to form a connected set that contain a cut point; but then, this cut point would either 
have to be visited more than once by one of the two 
portions, or it would have to be visited by both portions, and both these possibilities are excluded by the previous observations).

However, $A^\eps_{a,b}$ does not exclude the possibility that  ${\mathcal N}>1$ (it just implies that only one of these ${\mathcal N}$ excursions contains a crossing of $T^\eps(a,b)$).

\begin{figure}[h]
\centering
\includegraphics[trim = 60mm 0mm 60mm 0mm, clip, width=0.45\textwidth, page=2]{tubeb1}
\caption{The event $A_{a,b}^\eps$}
\label{tube2}
\end{figure}

The law of $\Cc_\delta 1_{{\mathcal N} = 1 }$ will be determined from the following lemma: 
\begin{lemma}\label{key-lemma}
For all distinct $a,b\in \partial U$, there is a constant $c_0(a,b)$ depending solely on $a,b$ (i.e. it does not depend on the law of $\Ec$) such that
$$\Pb(A^\eps_{a,b})\underset{\eps\to 0}{\sim} \Pb({\mathcal N}=1) \times f(a,b) \times c_0(a,b) \times \eps^2 \times  \exp(-\pi |a-b|/\eps) .$$
\end{lemma}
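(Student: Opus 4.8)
The plan is to compute the asymptotics of $\Pb(A^\eps_{a,b})$ by conditioning on the event $\{\mathcal N = 1\}$ together with the pair $\{x',y'\}$ of first/last visited points on $\partial U$, and then to show that the dominant contribution comes from configurations where $x' \approx a$ and $y' \approx b$ (or vice versa). On $\{\mathcal N=1\}$, the part of the unique excursion inside $\overline U$ is, by Lemma~\ref{lem:decomp} (applied now with the roles of $\Ub$ and $U$ played by $U$ and a thin tube), a Brownian bridge in $\Ub$ from $x'$ to $y'$ conditioned by the excursion-decomposition; but the only thing that matters for the events (i)--(iii) is the trace of that bridge inside $U$, together with the excursions in $A_\delta$ that connect it to the circle $\partial\Ub$. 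First I would argue that, given $\mathcal N=1$, the event $A^\eps_{a,b}$ forces $x'$ and $y'$ to lie in the two small arcs $[a]$ and $[b]$ (up to relabeling), because the trace meets $U$ and must be contained in $T^\eps(a,b)$, whose intersection with $\partial U$ is exactly $[a]\cup[b]$; the little excursions in $A_\delta$ decorating the bridge only add negligible mass near $\partial U$ and do not change the first/last visited points' asymptotic location.

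Next I would reduce to a pure tube-crossing probability. Conditionally on $\mathcal N=1$ and on $\{x',y'\}$ with $x'\in[a]$, $y'\in[b]$, events (i)--(iii) become: the $\Ub$-bridge from $x'$ to $y'$, restricted to $U$, stays inside the tube $T^\eps(a,b)$, and (after adjoining $[a]\cup[b]$) has a cut point separating $[a]$ from $[b]$. By the discussion following condition (iii) in the text, the cut-point requirement (iii) is, given that the trace stays in the tube and is connected to both arcs, an asymptotically probability-one event: this is precisely the cut-point statement proved at the end of Section~\ref{S2} for excursions in long thin trapezoids, transported by conformal invariance. So the whole probability is asymptotically the probability that a Brownian bridge in $\Ub$ from $x'$ to $y'$ stays, inside $U$, within the $\eps/2$-neighborhood of the segment $[a,b]$. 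Conformally mapping the tube $T^\eps(a,b)$ (a thin lens-shaped region, well approximated by a thin rectangle of length $\asymp|a-b|$ and width $\asymp\eps$) to a long trapezoid/rectangle of length $l \sim \pi|a-b|/\eps$, Lemma~\ref{trapezoid} and its variant give that the mass of excursions/bridges crossing such a tube behaves like $c_\theta\cdot\exp(-2l) = c_\theta \exp(-2\pi|a-b|/\eps)$ --- wait, here I must be careful with the exact geometry: the tube has aspect ratio giving extremal length $\pi|a-b|/\eps$, so the Poisson kernel across it decays like $\exp(-\pi|a-b|/\eps)$, and two $\eps$-factors arise from the normal-derivative normalizations at the two ends, yielding the stated $\eps^2 \exp(-\pi|a-b|/\eps)$ after integrating the smooth density $f$ over the arcs $[a]$, $[b]$ each of length $\asymp\eps$.

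More precisely, I would write $\Pb(A^\eps_{a,b}) = \Pb(\mathcal N=1)\int_{[a]\times[b]} f(x',y')\, \Pb\bigl(A^\eps_{a,b}\mid \mathcal N=1, \{x',y'\}\bigr)\, d\lambda(x')\,d\lambda(y') \cdot (1+o(1))$, then use that $f$ is continuous so $f(x',y') = f(a,b)(1+o(1))$ on the tiny arcs, and that the conditional probability factorizes (up to $1+o(1)$) as the product of: the probability the bridge from $x'$ to $y'$ crosses the tube and stays inside it, times the asymptotically-$1$ cut-point probability, times the probability that the decorating $A_\delta$-excursions don't leave the tube (also $\to 1$ since they are small and start near $[a]\cup[b]$). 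The crossing-and-confinement probability is, by Lemma~\ref{lem:decomp} rewritten for the inner tube and by the conformal image of Lemma~\ref{trapezoid}/its variant, equal to $c_\theta(\alpha)c_\theta(\beta)\exp(-\pi|a-b|/\eps)(1+o(1))$ where $\alpha,\beta$ encode the positions of $x',y'$ within the $\eps$-arcs; integrating the product of these against $d\lambda\,d\lambda$ over arcs of length proportional to $\eps$ produces a factor $\eps^2$ times a geometric constant $c_0(a,b)$ depending only on $a$, $b$ (through $|a-b|$, the opening angles $\theta$, and $\int c_\theta$). Combining everything gives the claimed equivalent with $c_0(a,b)$ manifestly independent of the law of $\Ec$.

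The main obstacle, I expect, is making rigorous the claim that the decorating excursions in the annulus $A_\delta$ (the pieces $\mu^{A}_{x,x'}$ and $\mu^A_{y',y}$ in Lemma~\ref{lem:decomp}) together with the inner bridge genuinely produce the trapezoidal asymptotics and the cut-point property with the \emph{uniformity} needed, i.e. that conditioning on $\mathcal N=1$ (which couples the end-points $x,y$ of the outer excursion to the event $A_\delta$) does not distort the leading order. The clean way around this is to note that (i) already forces the whole trace in $U$ into the tube, so the only relevant randomness is the $\Ub$-bridge between $x'$ and $y'$; the outer excursion's end-points $x,y$ and the annular pieces are irrelevant to leading order because their presence near $\partial U$ is automatically compatible with the tube (they live in $A_\delta$, and $L^\eps(a,b)$ contains neighborhoods of $[a]$ and $[b]$ in $\overline\Ub$ for small $\eps$). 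One must also check that the bridge confinement probability, as opposed to the excursion-mass asymptotics of Lemma~\ref{trapezoid}, carries the same $\exp(-\pi|a-b|/\eps)$ rate; this follows by absolute continuity between the $\Ub$-bridge restricted to $U$ and a Brownian excursion across the tube, exactly as in the cut-point argument, together with the remark after Lemma~\ref{trapezoid} that handles the circular (rather than straight) sides of $T^\eps(a,b)$.
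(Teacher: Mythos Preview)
Your overall strategy matches the paper's, but there are two genuine gaps.

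First, and most seriously, you never treat the contribution from $\{\mathcal N\ge 2\}$. You write $\Pb(A^\eps_{a,b})=\Pb(\mathcal N=1)\int_{[a]\times[b]}\cdots\,(1+o(1))$ and absorb everything else into the $(1+o(1))$, but nothing you have said justifies this. The event $A^\eps_{a,b}$ does \emph{not} force $\mathcal N=1$: it only forces that exactly one excursion makes the crossing, while any number of other excursions of $\Ec_\delta$ may enter the tube without crossing it. The paper handles this by showing that on $A^\eps_{a,b}\cap\{\mathcal N=k\}$ each of the $k-1$ non-crossing excursions must still touch the tube, which costs an extra factor $O(\eps^2)$ per excursion, so that $\Pb(A^\eps_{a,b},\,\mathcal N=k)=O(\eps^{2k}\exp(-\pi|a-b|/\eps))$ and the sum over $k\ge 2$ is negligible. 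Without this step the asymptotic equivalence is simply not proved.

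Second, your computation of the conditional probability given $\mathcal N=1$ and $\{x',y'\}$ is too loose to yield an honest asymptotic equivalent (as opposed to a rate). The middle piece is a bridge from $x'$ to $y'$ in $\Ub$, and on $A^\eps_{a,b}$ it is allowed to wander into the annulus $A_\delta$ near either end before and after its single tube crossing; this is not captured by a direct appeal to Lemma~\ref{trapezoid}, which concerns excursions \emph{in the tube}. The paper makes this precise by passing to the oriented event $\tilde A^\eps_{a,b}$, decomposing the bridge into (bridge in $AT^\eps$ from $x'$ to $x''\in[a]$ not crossing) $\oplus$ (excursion in $T^\eps$ from $x''$ to $y''$) $\oplus$ (bridge in $AT^\eps$ from $y''$ to $y'$ not crossing), and identifying the rescaled limits of the two end-bridges via the Green's function in the half-plane-with-strip domain $W$. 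That is where the correct constant $c_0(a,b)$ lives; your formula $c_\theta(\alpha)c_\theta(\beta)$ misses the $G^W$ factors. The paper also checks separately that the wrong-direction crossing (e.g.\ $x'\in[a]$, $y'\in[b]$ but the single crossing goes $[b]\to[a]$) contributes only $O(\eps^6\exp(-\pi|a-b|/\eps))$, a case your argument does not isolate. Your absolute-continuity remark would salvage the exponential rate but not the $\eps^2$ prefactor with a definite constant, which is what the lemma asserts.
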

Indeed, this lemma enables (if we know the law of the trace of $\Ec$) to determine the function $(a,b) \mapsto \Pb({\mathcal N}=1)f(a,b)$. 
Let us emphasize that $c_0(a,b)$ can be explicitly expressed in terms of a few poisson kernels (but the existence of $c_0 (a, b)$ is all what we need for our purposes).
We can then deduce the value of $\Pb({\mathcal N}=1)$ by integrating it over $\partial U^2$ and finally determine the function $f$. 

Let us now prove Lemma~\ref {key-lemma}: 

\begin{proof}

It suffices to consider the case where  $a=(1-\delta)e^{i(\pi/2+\theta)}$ for some $\theta\in(0,\pi/2]$ and $b = - \overline a$ (the general case is then obtained by a rotation around the origin).
In the rest of this proof, $\theta$, $a$ and $b$ will be fixed. 
We call $AT^\eps$ the domain obtained by glueing the tube $T^\eps$ to the annulus $A$ (more precisely, it is 
the interior of the closure of $A \cup T^\eps$).

 We will again consider the oriented versions of the  excursions $\Ec$ in this proof (obtained by assigning random orientations to the excursion in $\Ec$ with i.i.d. fair coins).

 The main part of the proof is to show that there exists $c_0(a,b)$ (independent of the law of $\Ec$) such that
\begin{align}\label{eq1}
\Pb(A^\eps_{a,b} \mid  {\mathcal N}=1)\underset{\eps\to 0}{\sim} f(a,b) \times  c_0(a,b) \times \eps^2 \times \exp\left({ -\pi |a-b|}{\eps} \right).
\end{align}

Conditionally on $\{{\mathcal N}=1\}$, let us call 
$x'$ and $y'$ the first and last points on $\partial U$ visited by that unique excursion, and let us call $E'$ the middle bridge from $x'$ to $y'$ by that unique excursion. 
Note that by the decomposition of excursions recalled in Section~\ref {S2}, the conditional law of $E'$ given ${\mathcal N}=1$, $x'$ and $y'$ is the bridge from $x'$ to 
$y'$ in $\Ub$. 

Furthermore, the conditional distribution of $(x', y')$ is then described 
by $f$. Since $f$ is smooth, the probability that $x' \in [a]$ and $y' \in [b]$ will be equivalent to $\lambda ([a]) \lambda ([b]) f(a,b) $ as $\eps \to 0$, and 
conditionally on this event, the conditional distribution of $(x', y')$ will be close to uniform on $[a] \times [b]$. 

Note that conditionally on ${\mathcal N}=1$, the event $A^\eps_{a, b} \cap \{{\mathcal N} = 1 \}$ can be read off from $E'$. 
Furthermore, (because of the considerations on cut points), when 
$ A^\eps_{a,b} \cap \{   {\mathcal N}=1 \} $ occurs, it means that the bridge $E'$ does cross the tube $T^\eps (a,b)$ exactly once. Note that this crossing can either be from the left to the right, or from the right to the left (these two events are not symmetric, since we chose our bridge $E'$ to start from $x'$ and to finish at $y'$). 

Conditionally on ${\mathcal N}=1$,  let us denote by $\tilde A^\eps  
= \tilde A^\eps_{a,b}$ the event that $E'$ stays in $AT^\eps$ and does cross the tube $T^\eps (a,b)$ exactly once and that it does so in the direction from 
left to right. 
We are now going to evaluate the probability of $\tilde A^\eps_{a.b} \cap \{ {\mathcal N} =1 \}  $ and  argue that 
\begin{align} \label{aab}
 \Pb  [ A^\eps_{a,b} \cap \{    {\mathcal N} =1   \} ]  \sim  \Pb [  \tilde A^\eps_{a.b} \cap \{  {\mathcal N } = 1 \}  ]
\end{align}
as $\eps \to 0$. 

Let us work conditionally on ${\mathcal N}=1$, $x'$ and $y'$. 
On the event $ \tilde A^\eps_{a,b}$, we can decompose $E'$ into three (conditionally) independent parts 
(see Figure~\ref{tube3}):
\begin {itemize}
 \item A bridge from $x'$ to some $x'' \in [a]$ with the property that it does not cross  $T^\eps$. 
 \item An excursion in $T^\eps$ from $x''$ to some $y'' \in [b]$.
 \item A bridge from $y''$ to $y'$ in $AT^\eps$ with the property that it does not cross $T^\eps$. 
\end {itemize}
The idea is now the following: 
A bridge from $x'$ to $x''$ in $AT^\eps$ will typically stay pretty close to $a$. 
In particular, with a (conditional) probability that tends to $1$ when $\eps \to 0$, it will not reach the middle third of $T^\eps$. 
The similar result will hold for the final bridge between $y''$ and $y'$. 
On the other hand, an excursion in $T^\eps$ from $x''$ to $y''$ will typically contain a cut point in the middle forth of $T^\eps$. 
Hence, when $\eps \to 0$, the conditional probability given $ \tilde A^\eps_{a,b} \cap \{ {\mathcal N}=1 \}$ that $A^\eps (a,b)$ holds 
goes to $1$. 

Furthermore, the previous decomposition allows to evaluate the probability that $\tilde A^\eps_{a,b}$ holds, conditionally on ${\mathcal N} =1$, $x'$ and $y'$:  
One has to integrate the product of the following three terms with respect to the Lebesgue measure over $x''$ and $y''$ on $[a]$ and $[b]$: 
\begin {enumerate} 
 \item The mass of the bridges from $x'$ to $x''$ in $AT^\eps$ that do not cross $T^\eps$. 
 \item The mass of the bridges from $y''$ to $y'$ in $AT^\eps$ that do not cross $T^\eps$. 
 \item The mass of the excursion measure from $x''$ to $y''$ in $T^\eps$. 
\end {enumerate}
We can already note that the total mass of the excursion measure from $x''$ to $y''$ is (modulo scaling) controled by the consequence 
of Lemma~\ref {trapezoid}. The limiting behavior of the masses of the bridge measures is also easily described using scaling (by $1/\eps$). 
Let $W$ be the domain depicted in the left of Figure~\ref{tube4}, which is the union of the half-plane to the left of the line $e^{i\theta}\Rb$ and a horizontal strip of width $\eps$.
As $\eps\to 0$, $G^{AT^\eps}(x',x'')$ converges to $G^W(\tilde x',\tilde x'')$ where $\tilde x', \tilde x''$ are the images of $x', x''$ under scaling. Here is a simple proof: For any $r>0$, the domain $B(a,\eps r)\cap AT^\eps$ rescaled by $1/\eps$ converges to $B(a,r)\cap W$, hence $G^{B(a,\eps r)\cap AT^\eps}(x',x'')$ converges to $G^{B(a,r)\cap W}(\tilde x',\tilde x'')$ uniformly for all $x',x''\in[a]$ (by conformal invariance of $G$). On the other hand,  the difference between $G^{B(a,\eps r)\cap AT^\eps}(x',x'')$ and $G^{AT^\eps}(x',x'')$ as well as  that between $G^{B(a,r)\cap W}(\tilde x',\tilde x'')$ and $G^W(\tilde x',\tilde x'')$ is bounded by some function $c(r)$ which goes to $0$ as $r\to\infty$, for all $r<\delta/\eps$ and for all $x',x''\in[a]$.
\begin{figure}[h!]
\centering
\includegraphics[trim = 60mm 36mm 60mm 0mm, clip, width=0.56\textwidth, page=3]{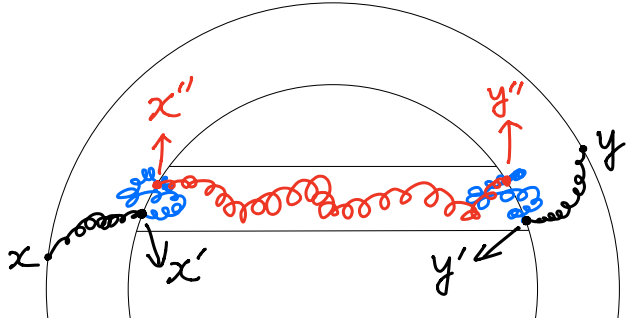}
\caption{Decomposition of the excursion on the event $\tilde A_{x,y}^\eps$}
\label{tube3}
\end{figure}
\begin{figure}[h!]
\centering
\includegraphics[trim = 51mm 0mm 46mm 0mm, clip, width=\textwidth]{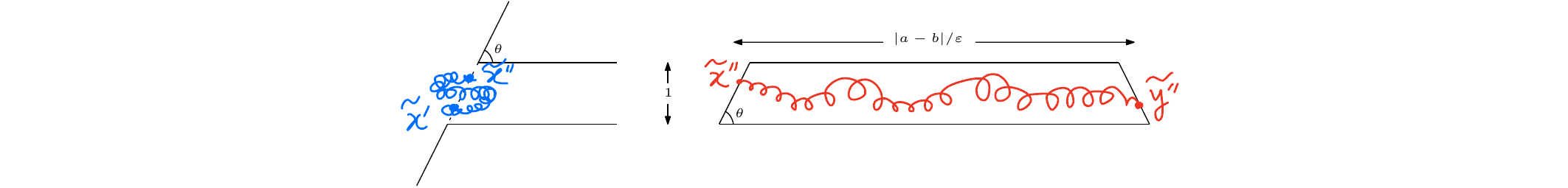}
\caption{The regions $W$ and $V^\eps$}
\label{tube4}
\end{figure}
{
We can then integrate the product of the three measures and get that the mass of the bridge measure from $x'$ to $y'$ restricted to the event $\tilde A_{a,b}$ is equal to
$c(\tilde x',\tilde y') \exp(-\pi|a-b|/\eps)$ where $c(\tilde x', \tilde y')$ is some constant (independent of the law of $\Ec$). Integrating again this times $f(x', y')$ with respect to $d\lambda(x') d\lambda(y')$ on $[a]\times [b]$, we get that there exist some constant $c_0(a,b)$ such that
\begin{align*}
\Pb(\tilde A^\eps_{a.b} \mid  {\mathcal N}=1)\underset{\eps\to 0}{\sim} f(a,b) \times  c_0(a,b) \times \eps^2 \times \exp\left({ -\pi |a-b|}/{\eps} \right).
\end{align*}
}
We then want to deduce (\ref{aab}). 
For this, it suffices to show that conditionally on $\Nc=1$, the probability of crossing  $T^\eps$ in the wrong direction is of smaller order than  $\eps^2 \exp(-\pi |a-b|/\eps)$. If the excursion crosses $T^\eps$ in the wrong direction, then it first needs to intersect $[a]$ without crossing $T^\eps$ (which costs $O(\eps^2)$), then it needs to intersect $[b]$ and make the crossing from $[b]$ to $[a]$ (which costs a constant times $\eps^2\exp(-\pi |a-b|/\eps)$) and finally it needs to intersect $[b]$ without crossing $T^\eps$ (which again costs $O(\eps^2)$). The (conditional) probability of this event is of order $\eps^6 \exp(-\pi |a-b|/\eps)$ as $\eps \to 0$. 
We have thus proved (\ref{aab}) and (\ref{eq1}).

It finally remains to explain why $\Pb(A^\eps_{a,b} , {\mathcal N} \ge 2 )$ decays much faster than   $\Pb(A^\eps_{a,b}, {\mathcal N} =1  )$.
For $k>1$, if $\Pb( {\mathcal N}=k)>0$, then on the event $A^\eps_{a,b} \cap \{ {\mathcal N} =k\}$, one of the $k$ pairs in $\Ec_\delta$ has to cross the tube $T^\eps$ and the other $k-1$ pairs have to intersect $T^\eps$ without crossing it, because of the cut point condition (iii). For one excursion, crossing $T^\eps$ costs a constant times $ \eps^{2} \exp(-\pi |a-b|/\eps)$ (for same reason as (\ref{eq1})) and intersecting $T^\eps$ without crossing costs a constant times $\eps^2$. Therefore,
$$\Pb(A^\eps_{a,b}, {\mathcal N}=k) =O\left[   \eps^{2k} \exp(-\pi |a-b|/\eps) \right].$$
Since $\Pb(A^\eps_{a,b})=\sum_{k=0}^\infty \Pb(A^\eps_{a,b}, {\mathcal N}=k)$, this concludes the proof.
\end{proof}

\section{Two pairs} \label{S:Two pairs}

If $\Pb(\Nc=1)<1$, then we know that $\Pb(\Nc=2)>0$.
Conditionally on $\Nc=2$, we first choose either order between the two pairs at random (with probability $1/2$) and then assign either order between the two points of each pair with probability $1/2$ as well. In this way, we obtain an ordered pair of ordered pairs $((x_1', y_1'), (x_2', y_2'))$ in $(\partial U)^4$. 
Just as in the previous section, the conditional law of the quadruplet given $\Nc=2$ then admits a smooth and positive density function $f_2$.
The goal of this section is to deduce $\Pb(\Nc=2)$ and $f_2$ from the law of the trace of $\Ec$. 
We will detail only the aspects of the proof that are new compared to the case of one pair.

\subsection*{Two non-crossing pairs}

We first consider two pairs $(a_1,a_2), (a_3, a_4)$ of distinct points on $\partial U$ such that the segments $[a_1, a_2]$ and $[a_3, a_4]$ do not cross each other. Then for $\eps$ small enough, the two tubes $T^\eps(a_1,a_2)$ and $T^\eps(a_3, a_4)$ do not intersect each other (see Figure~\ref{tubeb3}).
We define $A^\eps(a_1,a_2,a_3,a_4)$ to be the event that (i) $\Tc(\Ec)\cap U\subset T^\eps(a_1,a_2)\cup T^\eps(a_3, a_4)$, that (ii) $(\Tc(\Ec)\cap T^\eps(a_1,a_2))\cup[a_1]\cup[a_2]$ is connected, but has a cut point disconnecting $[a_1]$ from $[a_2]$ and that (iii) $(\Tc(\Ec)\cap T^\eps(a_3,a_4))\cup[a_3]\cup[a_4]$ is connected, but has a cut point disconnecting $[a_3]$ from $[a_4]$.
Note that this event $A^\eps(a_1,a_2,a_3,a_4)$ is measurable with respect to the trace of $\Ec$.

We first aim to show the following lemma.
\begin{lemma}\label{lem1234}
We have
\begin{align*}
\Pb(A^\eps(a_1,a_2,a_3,a_4))\underset{\eps\to 0}{\sim}\Pb(A^\eps(a_1,a_2,a_3,a_4), \Nc\le 2).
\end{align*}
\end{lemma}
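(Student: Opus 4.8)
The plan is to reduce Lemma~\ref{lem1234} to the kind of counting estimate that was already carried out in the proof of Lemma~\ref{key-lemma}. The left-hand side decomposes as $\Pb(A^\eps(a_1,a_2,a_3,a_4)) = \sum_{k\geq 0}\Pb(A^\eps(a_1,a_2,a_3,a_4),\,\Nc=k)$, so the content of the statement is that the terms with $k\geq 3$ are of strictly smaller order than the (dominant) terms with $k\leq 2$. First I would record what the event forces: by the cut-point conditions (ii) and (iii), and the same absolute-continuity / no-double-cut-point argument used after the definition of $A^\eps_{a,b}$ in Section~\ref{S3}, on the event $A^\eps(a_1,a_2,a_3,a_4)\cap\{\Nc=k\}$ the tube $T^\eps(a_1,a_2)$ is crossed \emph{exactly once} by a single excursion, the tube $T^\eps(a_3,a_4)$ is crossed exactly once by a single excursion, and every one of the $k$ excursions in $\Ec_\delta$ must lie inside $T^\eps(a_1,a_2)\cup T^\eps(a_3,a_4)$ and hence, since the two tubes are disjoint, each individual excursion lies in one of the two tubes. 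Moreover the part of $\Tc(\Ec)$ inside each tube is connected (together with the two end-arcs), which forces every excursion sitting in a given tube to intersect the corresponding end-arcs of that tube.

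Next I would put the cost estimates in place, all of which are already available. An excursion that intersects $U$ inside a fixed tube $T^\eps(a_i,a_j)$ but does \emph{not} cross it has a conditional mass of order $\eps^2$ (as in the $O(\eps^2)$ estimates in the proof of Lemma~\ref{key-lemma}); an excursion that crosses $T^\eps(a_i,a_j)$ has conditional mass of order $\eps^2\exp(-\pi|a_i-a_j|/\eps)$ by the scaling argument and the consequence of Lemma~\ref{trapezoid}. Also, the event $\Nc=k$ has positive probability only when $\Cc$ has at least $k$ pairs with positive probability, so there are only finitely many relevant $k$ for any contribution, and in any case each such probability is bounded. Writing $m_1\geq 1$ and $m_2\geq 1$ for the numbers of the $k$ excursions lying in the first and second tube respectively (so $m_1+m_2=k$), the event requires exactly one crossing in each tube and the remaining $m_1-1$ (resp.\ $m_2-1$) excursions merely intersecting their tube without crossing. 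Hence
\begin{align*}
\Pb\bigl(A^\eps(a_1,a_2,a_3,a_4),\,\Nc=k\bigr)=O\!\left(\eps^{2k}\exp\!\bigl(-\pi|a_1-a_2|/\eps-\pi|a_3-a_4|/\eps\bigr)\right).
\end{align*}
In particular for $k=2$ one gets the order $\eps^4\exp(-\pi(|a_1-a_2|+|a_3-a_4|)/\eps)$, while for $k\geq 3$ one gains at least an extra factor $\eps^2$.

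It then remains to check that the $k\leq 2$ side is genuinely of the order $\eps^4\exp(-\pi(|a_1-a_2|+|a_3-a_4|)/\eps)$ and not smaller, so that the $k\geq 3$ contributions really are negligible. For this I would note that $\Pb(A^\eps(a_1,a_2,a_3,a_4),\,\Nc=2)$ is bounded below by a constant times this quantity: conditionally on $\Nc=2$, with positive density $f_2$ one has $(x_1',y_1')$ near $(a_1,a_2)$ and $(x_2',y_2')$ near $(a_3,a_4)$, and then — exactly as in the one-pair analysis, applied independently to each of the two conditionally independent bridges via the decomposition of Section~\ref{S2} — each bridge stays in its enlarged tube, crosses it once, and produces a cut point in the middle, with the stated probability (this uses that the two tubes are disjoint, so the two middle bridges do not interfere and the trace inside each tube comes from exactly one excursion). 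Combining the lower bound on $k=2$ with the upper bounds on $k\geq 3$ gives $\Pb(A^\eps)=\Pb(A^\eps,\Nc\le 2)(1+o(1))$, which is the claim.

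The main obstacle, as in Section~\ref{S3}, is the bookkeeping behind the ``crossing exactly once / each excursion in one tube'' reduction: one must be careful that on $A^\eps(a_1,a_2,a_3,a_4)$ the two cut-point conditions together genuinely rule out, say, an excursion that wanders through both tubes, or two excursions jointly realizing a single tube-crossing, or a ``fake'' crossing assembled from two non-crossing pieces of different excursions. All three are excluded by the no-double-cut-point property of \cite{MR1062056} together with the absolute-continuity argument already spelled out after the definition of $A^\eps_{a,b}$, but it is the place where the disjointness of the two tubes is essential and where the argument genuinely uses that the tubes do not cross — which is why the crossing configuration is deferred to Section~\ref{S4}.
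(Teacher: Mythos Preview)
Your overall strategy is the same as the paper's --- show that $\Pb(A^\eps,\Nc=2)$ has the exact order $\eps^4\exp(-\pi(|a_1-a_2|+|a_3-a_4|)/\eps)$ and that every $\Nc=k$ with $k\ge 3$ carries at least one extra $\eps^2$ factor from an excursion that intersects a tube without crossing it --- and your conclusion is correct.

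However, there is a factual slip in your bookkeeping. You assert that since the two tubes are disjoint, ``each individual excursion lies in one of the two tubes'', and you then partition the $k$ excursions as $m_1+m_2=k$. This is not right: the excursions in $\Ec_\delta$ are excursions in $\Ub$ between points on $\partial\Ub$, and the constraint $\Tc(\Ec)\cap U\subset T^\eps(a_1,a_2)\cup T^\eps(a_3,a_4)$ only restricts the part of each excursion that lies inside $U$. The part of an excursion in the annulus $A=\Ub\setminus\overline U$ is unconstrained, so a single excursion can perfectly well enter the first tube, return to $A$, and then enter the second tube. In fact this is exactly what happens on $\{\Nc=1\}$ --- see Lemma~\ref{lem8} and Figure~\ref{tubeb3} --- so your last paragraph, where you list ``an excursion that wanders through both tubes'' among the scenarios to be ruled out, is mistaken: that scenario is not excluded and is in fact the entire content of the $\Nc=1$ term.

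Fortunately this does not damage the estimate you need. The correct (and simpler) counting is just the paper's: on $A^\eps\cap\{\Nc=k\}$ each tube is crossed exactly once, so at most two of the $k$ excursions can contain a crossing; the remaining $k-2$ (or more) excursions intersect $U$, hence intersect some tube, without crossing it, and each such excursion contributes a factor $O(\eps^2)$. This yields $\Pb(A^\eps,\Nc=k)=O(\eps^{2k}\exp(-\pi(|a_1-a_2|+|a_3-a_4|)/\eps))$ as you wrote, and together with the lower bound on the $\Nc=2$ term the lemma follows. So: drop the $m_1+m_2=k$ partition and the claim that excursions are confined to a single tube, and the proof is fine.
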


The proof of Lemma~\ref{lem1234} will use the following lemma which will build on the results of Section~\ref{S3}.
\begin{lemma}\label{lem8}
There exist some function $\tilde f$ which is determined by the law of the trace of $\Ec$, such that 
\begin{eqnarray*}
\lefteqn {\Pb(A^\eps(a_1,a_2,a_3,a_4),  \Nc=1)} 
\\
&\underset{\eps\to 0}{\sim}&  \tilde f(a_1,a_2,a_3,a_4) \times \eps^4 \times   \exp(-\pi |a_1-a_2|/\eps)  \times  \exp(-\pi |a_3-a_4|/\eps).
\end{eqnarray*}
\end{lemma}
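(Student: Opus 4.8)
\textbf{Proposal for the proof of Lemma~\ref{lem8}.}
The plan is to reduce the statement to a repeated application of the one-pair analysis carried out in the proof of Lemma~\ref{key-lemma}. On the event $\{\Nc=1\}$, there is a single excursion $E$ in $\Ec_\delta$, with middle bridge $E'$ from $x'$ to $y'$ in $\Ub$; by Lemma~\ref{lem:decomp}, conditionally on $\Nc=1$, $x'$ and $y'$, the path $E'$ is just a Brownian bridge. For the event $A^\eps(a_1,a_2,a_3,a_4)$ to hold with $\Nc=1$, the cut-point conditions (ii) and (iii) force this single bridge $E'$ to cross the tube $T^\eps(a_1,a_2)$ exactly once \emph{and} the tube $T^\eps(a_3,a_4)$ exactly once, while staying inside $AT^\eps_1\cup AT^\eps_2$ (the annulus glued to the two tubes) and not crossing either tube elsewhere; moreover the two cut-point conditions are (for small $\eps$) implied, up to negligible probability, by the requirement that the portion of $E'$ inside each tube contains a cut point in the middle of that tube, which happens with conditional probability tending to $1$ exactly as in Section~\ref{S3}. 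Thus, just as in the one-pair case, $\Pb(A^\eps(a_1,a_2,a_3,a_4),\Nc=1)$ is asymptotically equal to the probability (times $\Pb(\Nc=1)$) that the bridge $E'$ performs a prescribed sequence of two tube-traversals in prescribed directions and nothing else.

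Next I would estimate this bridge probability by the same concatenation argument used for $\tilde A^\eps_{a,b}$. Conditioning on $\Nc=1$ and on $(x',y')$, the bridge $E'$ decomposes into: a bridge from $x'$ to an entry point on one end of the first tube that avoids crossing either tube; an excursion crossing $T^\eps(a_1,a_2)$; a bridge in the appropriate glued domain from the exit of the first tube to the entry of the second tube, avoiding crossing either tube; an excursion crossing $T^\eps(a_3,a_4)$; and finally a bridge from the exit of the second tube back to $y'$, again avoiding both tubes. Integrating the product of the masses of these five pieces over the four endpoints of the two crossings: each of the two crossing excursions contributes (via the scaled version of Lemma~\ref{trapezoid}) a factor $c(\theta_i)\,\exp(-\pi|a_{2i-1}-a_{2i}|/\eps)$ up to $1+o(1)$, while the three bridge pieces, after rescaling by $1/\eps$ near the relevant boundary arcs, converge to Green's-function quantities in the appropriate limiting domains (half-plane plus thin strips), exactly as $G^{AT^\eps}(x',x'')$ converged to $G^W(\tilde x',\tilde x'')$ in the one-pair proof. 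Since the endpoints $(x',y')$ range over the arcs $[a_i]$ whose lengths are $O(\eps)$, this produces two further factors of $\eps$, and the dependence on the actual limiting positions $a_i$ together with the averaging against the density $f$ (for $\Nc=1$) is absorbed into a single constant. One then symmetrizes over the two possible orders of the tubes and the two directions within each tube — the ``wrong-direction'' and ``wrong-order'' alternatives cost extra powers of $\eps$ by the same accounting as in Section~\ref{S3} and are negligible — so the total is $\tilde f(a_1,a_2,a_3,a_4)\,\eps^4\,\exp(-\pi|a_1-a_2|/\eps)\,\exp(-\pi|a_3-a_4|/\eps)(1+o(1))$. Because every ingredient (the density $f$ on the event $\Nc=1$, via Lemma~\ref{key-lemma}, and the deterministic Poisson-kernel/Green's-function constants) is determined by the law of the trace of $\Ec$, the function $\tilde f$ is itself determined by that law.

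The main obstacle I anticipate is not any single estimate — each is a routine variant of Section~\ref{S3} — but the bookkeeping needed to show that the intermediate bridge piece (from the exit of the first tube to the entry of the second) genuinely behaves like a Green's-function quantity uniformly in $\eps$, including the uniformity when the two tubes, though non-crossing, come close to each other or to the same boundary region of $U$. One must check that conditioning this middle bridge to avoid crossing \emph{both} tubes (not just to stay in a single glued domain) still yields a mass that converges after rescaling, and that the localization-near-the-boundary argument (the ``$c(r)\to 0$ as $r\to\infty$'' estimate from the one-pair proof) goes through with the second tube removed from the relevant domain. A secondary point requiring care is the claim that, conditionally on $E'$ traversing both tubes correctly, the joint probability of the two middle-cut-point events tends to $1$; this follows from the independence of the two tube-crossing excursions in the concatenation and the corresponding one-tube statement recalled at the end of Section~\ref{S2}, but it should be stated explicitly. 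Once these uniformity points are in hand, the proof is a direct extension of the one-pair computation.
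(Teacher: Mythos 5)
Your overall strategy --- reduce to the single bridge $E'$ on $\{\Nc=1\}$, replace the cut-point conditions by the event that $E'$ traverses each tube exactly once, decompose the path into bridge pieces and tube-crossing excursions, and extract the factor $\eps^4\exp(-\pi|a_1-a_2|/\eps)\exp(-\pi|a_3-a_4|/\eps)$ from the scaled trapezoid estimate --- is exactly the paper's. But there is one genuine error in your accounting of the configurations. You assert that, after fixing one ``correct'' order and direction of traversal, the ``wrong-direction'' and ``wrong-order'' alternatives cost extra powers of $\eps$ and are negligible, so that everything is absorbed into a single constant times (implicitly) one value of $f$. This is false. What is negligible, for a \emph{fixed} choice of the arcs containing the first entry point $x'$ and the last exit point $y'$ of the excursion on $\partial U$, are the scenarios requiring additional visits to tube ends without crossing (that is the Section~3 accounting). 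But here there are four inequivalent choices of the pair of arcs $(x',y')\in[a_i]\times[a_l]$ that each admit a leading-order scenario: the oriented excursion can enter at $[a_1]$ and exit at $[a_3]$, or at $[a_1]$ and $[a_4]$, or at $[a_2]$ and $[a_3]$, or at $[a_2]$ and $[a_4]$ (each doubled by time reversal, giving the eight configurations). All four contribute at the \emph{same} order $\eps^4\exp(-\pi|a_1-a_2|/\eps)\exp(-\pi|a_3-a_4|/\eps)$, and crucially they involve the density $f$ evaluated at \emph{different} pairs, namely $f(a_1,a_3)$, $f(a_1,a_4)$, $f(a_2,a_3)$, $f(a_2,a_4)$, each multiplied by its own geometric constant. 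The function $\tilde f$ is the sum of these four terms, not a single one of them; if you keep only one configuration, the asserted asymptotic equivalence fails by a bounded but nontrivial factor. (The existence of a trace-determined $\tilde f$ survives, since $\Pb(\Nc=1)$ and $f$ are already known from Section~3, but your proof as written does not establish the stated equivalence.)

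A second, smaller point: your five-piece decomposition compresses the middle of the path into one ``bridge from the exit of the first tube to the entry of the second.'' The paper instead splits this into a bridge leaving the first tube, an excursion in the annulus $A$ from a neighborhood of $[a_2]$ (or $[a_4]$) to a neighborhood of the entry arc of the second tube, and a bridge entering the second tube; it is this middle excursion in $A$ whose mass, in the $\eps\to 0$ limit, produces (together with the two outer bridges to $\partial\Ub$) the factor $\Pb(\Nc=1)f(a_i,a_l)$ for the relevant pair of arcs. Your formulation is not wrong, but the uniformity worry you raise about this middle piece largely dissolves once it is factored this way, since the annulus excursion does not degenerate as $\eps\to 0$ and only the two short bridge pieces near the tube ends need the rescaling/Green's-function limit already established in the one-pair case.
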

Let us first show how to deduce Lemma~\ref{lem1234} from Lemma~\ref{lem8} using the same ideas as in Section~\ref {S3}.
\begin{proof}[Proof of Lemma~\ref{lem1234}]
Conditionally on $\Nc=2$ and on $\Dc$, the two Brownian bridges in $\Ub$ that respectively connect the two pairs of points in $\Dc$ are independent.
We can directly adapt Lemma~\ref{key-lemma} and (\ref{eq1}) to get that
\begin{equation}\label{eq1234}
\begin{split}
\Pb(A^\eps(a_1,a_2,a_3,a_4) \mid \Nc=2)\underset{\eps\to 0}{\sim}  & 8f_2(a_1,a_2,a_3,a_4) \times c_0(a_1, a_2)  \times \eps^2 \times  \exp(-\pi |a_1-a_2|/\eps)\\
&\times c_0(a_3, a_4)  \times \eps^2 \times  \exp(-\pi |a_3-a_4|/\eps),
\end{split}
\end{equation}
where the factor $8$ is due to the permutations of the points $a_1,a_2,a_3,a_4$. 

It is also immediate to see that $\Pb(A^\eps(a_1,a_2,a_3,a_4), \Nc\ge 3)$ is of smaller order, because at least one of the excursions needs to intersect one of the tubes without crossing it, creating an extra $\eps^2$ factor in the probability. 
\end{proof}
One can then conclude just as in Section~\ref {S3}: Combining Lemma~\ref{lem1234}, Lemma~\ref{lem8} and (\ref{eq1234}) enables to determine  
$$\Pb(\Nc=2) f_2(a_1,a_2,a_3,a_4)  $$  as a function of the law of the trace of $\Ec$ (but only for all non-crossing pairs $(a_1,a_2), (a_3,a_4)$).
It therefore remains to explain the proof of Lemma~\ref {lem8}. 

\begin{proof}[Proof of Lemma~\ref{lem8}]

\begin{figure}[h!]
\centering
  \includegraphics[trim = 65mm 0mm 65mm 0mm, clip,width=.5\linewidth]{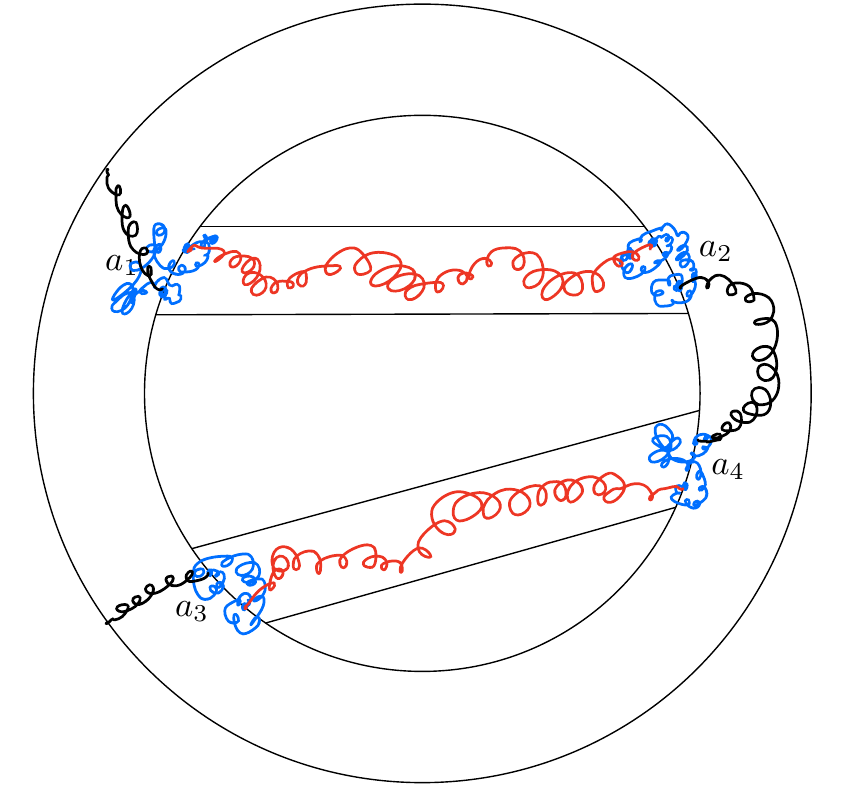}
  \captionof{figure}{Decomposition of the excursion for the configuration $\tilde C(a_1,a_2,a_4,a_3)$}
  \label{tubeb3}
\end{figure}

We will consider the oriented versions of the excursions in $\Ec$ (by assigning either orientation to the excursions in $\Ec$ with probability $1/2$).
The event $A^\eps(a_1,a_2,a_3,a_4) \cap \{ \Nc=1\}$ is then the union of eight configurations corresponding to choice of the order and of the directions in which 
the two tubes are crossed by the oriented excursion. 
We define $C(a_i,a_j,a_k,a_l)$ to be the event such that the (unique) excursion in $\Dc$ first crosses $T^\eps(a_i,a_j)$ from $[a_i]$ to $[a_j]$ and then crosses $T^\eps(a_k,a_l)$ from $[a_k]$ to $[a_l]$.
 Since a configuration and its time reversal have the same probability, we get that
\begin{align*}
\Pb\left[A^\eps(a_1,a_2,a_3,a_4), \Nc=1 \right]= &2\Pb[C(a_1,a_2,a_4,a_3)]+ 2\Pb[C(a_1,a_2,a_3,a_4)]\\
&+2 \Pb[C(a_2,a_1,a_3,a_4)]+2\Pb[C(a_2,a_1,a_4,a_3)].
\end{align*}
It will therefore be sufficient to estimate the quantities $\Pb[C(a_i,a_j,a_k,a_l)]$.

Instead of $C(a_1,a_2,a_4,a_3)$, we consider a similar event $\tilde C(a_1,a_2,a_4,a_3)$ 
where the excursion stays in $A\cup T^\eps(a_1,a_2) \cup T^\eps(a_3,a_4)$, first hits $U$ on some point $x'\in [a_1]$, then crosses the tube $T^\eps(a_1,a_2)$ from $[a_1]$ to $[a_2]$, then goes from $[a_2]$ to $[a_4]$ without returning to $[a_2]$ after hitting $[a_4]$, then it crosses $T^\eps(a_3,a_4)$ from $[a_4]$ to $[a_3]$ and finally it ends at $\partial\Ub$ without intersecting again $[a_1], [a_2], [a_4]$.
It is easy to see, using the same ideas than in Section~\ref{S3}, that
\begin{align*}
\Pb(C(a_1,a_2,a_4,a_3))\underset{\eps\to 0}{\sim} \Pb(\tilde C(a_1,a_2,a_4,a_3)).
\end{align*}
We can then repeat similar arguments as in Lemma~\ref{key-lemma}:

The conditional distribution of $(x',y')\in \Dc$ has the density function $f$ as defined in Section~\ref{S3}. Therefore, the conditional probability that $x'\in[a_1]$ and $y'\in[a_3]$ is equivalent to $\lambda[a_1] \lambda[a_3] f(a_1,a_3)$ as $\eps\to 0$.  

On the event $\tilde C(a_1,a_2,a_4,a_3)$ , we can decompose the bridge from $x'$ to $y'$ in the following way (see Figure~\ref{tubeb3}): 
\begin{enumerate}
\item A bridge from $x'$ to $x''$ in $A\cup T^\eps(a_1,a_2)$ that does not cross $T^\eps(a_1,a_2)$
\item An excursion from $x''$ to $w''$ in $T^\eps(a_1,a_2)$
\item A bridge from $w''$ to $w'$  in $A\cup T^\eps(a_1,a_2)$ that does not cross $T^\eps(a_1,a_2)$
\item An excursion from $w'$ to $z'$ in $A$
\item  A bridge from $z'$ to $z''$  in $A\cup T^\eps(a_3,a_4)$ that does not cross $T^\eps(a_3,a_4)$
\item An excursion from $z''$ to $y''$ in $T^\eps(a_3,a_4)$
\item A bridge from $y''$ to $y'$  in $A\cup T^\eps(a_3,a_4)$ that does not cross $T^\eps(a_3,a_4)$
\end{enumerate}
We then proceed as in Lemma~\ref{key-lemma}. After rescaling and integration, one gets that there exist some constant $c(a_1,a_2,a_4,a_3)$
{(here and in the sequel, except stated otherwise, the constants are all independent of the law of $\Tc(\Ec)$)} such that for the same density function $f$, 
\begin{align*}
\Pb(C(a_1,a_2,a_4,a_3))\underset{\eps\to 0}{\sim} &\Pb(\Nc=1) \times f(a_1,a_3)\times c(a_1,a_2,a_4,a_3)\\
& \times \eps^4\times \exp(-\pi|a_1-a_2|/\eps) \times \exp(-\pi |a_3-a_4|/\eps).
\end{align*}
Similarly, the probabilities of the other configurations are also equivalent to a constant times $f$ (applied to the corresponding pair) times $\eps^4\times \exp(-\pi|a_1-a_2|/\eps) \times \exp(-\pi |a_3-a_4|/\eps)$. Roughly speaking, the term $\eps^4\times \exp(-\pi|a_1-a_2|/\eps) \times \exp(-\pi |a_3-a_4|/\eps)$ 
comes from the contributions of the two bridges crossing the two tubes, and the different constant factors in front { (including $f$)} come from the masses of
the excursions in $A$. Summing up all the terms, the lemma follows.
\end{proof}

\subsection*{Two crossing pairs}

We now consider the case when the two segments  $[a_1, a_2]$ and $[a_3, a_4]$ are not disjoint (but we still suppose that these four points are all different).
As we will explain in Section~\ref {Sfinal}, in the very special setting of Corollary~\ref {main corollary}, one can take advantage of the special resampling properties 
of the Brownian loop-soup with parameter $c=1$ in order to by-pass the study of this case, and to obtain it as a direct consequence of the case of two non-crossing pairs. 
In other words, the following part is needed only to derive Proposition~\ref {main theorem} in the general case. 

\subsubsection*{Setup and notation}
When $\eps$ is small enough, the union of the tubes $T^\eps(a_1,a_2)$ and $T^\eps(a_3,a_4)$ can subdivide each other into four sub-tubes $I_1,I_2,I_3,I_4$ and the intersection of the two tubes (which is a small 
rhombus) as depicted in Figure~\ref{sub-tube2}. 
Let $[a_1'], [a_2'], [a_3'], [a_4']$ be the four sides of the middle rhombus, in such a way that 
each sub-tube $I_i$ has two ends $[a_i]$ and $[a_i']$ for $i=1,2,3,4$.

We define $A^\eps(a_1,a_2,a_3,a_4)$ to be the event such that 
(i) $\Tc(\Ec)\cap U\subset T^\eps(a_1,a_2)\cup T^\eps (a_3,a_4)$ and that (ii) $(\Tc(\Ec)\cap I_i) \cup [a_i] \cup [a_i']$ is connected and has a cut point disconnecting $[a_i]$ from $[a_i']$ for all $i=1,2,3,4$.
The event $A^\eps(a_1,a_2,a_3,a_4)$ is measurable with respect to the trace of $\Ec$.

\begin{figure}[h]
\centering
  \includegraphics[width=.40\linewidth, page=10]{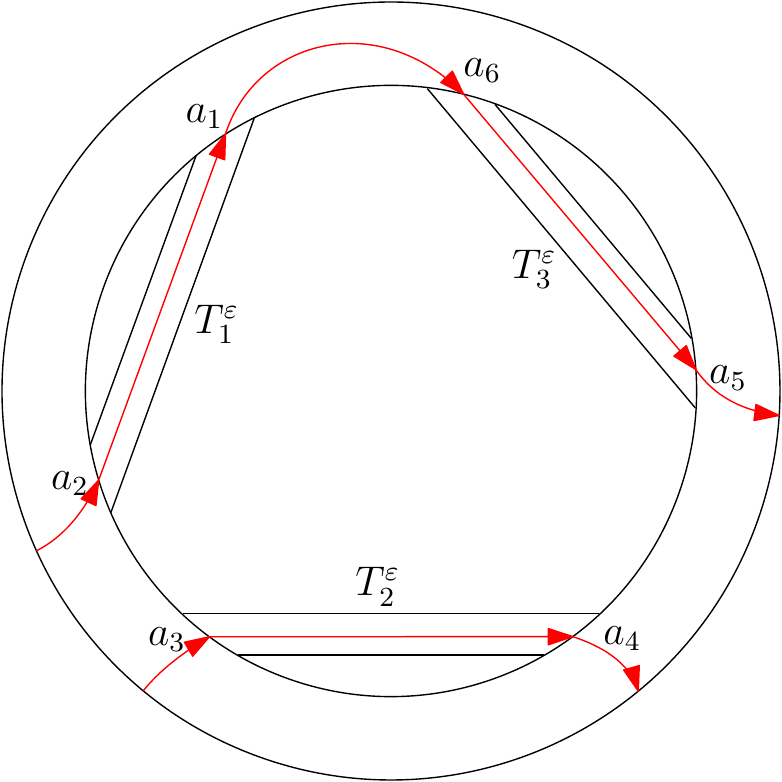}
  \captionof{figure}{The tubes and sub-tubes}
  \label{sub-tube2}
  \end {figure}
  
\begin {figure}
  \centering
  \includegraphics[width=.75\linewidth, page=4]{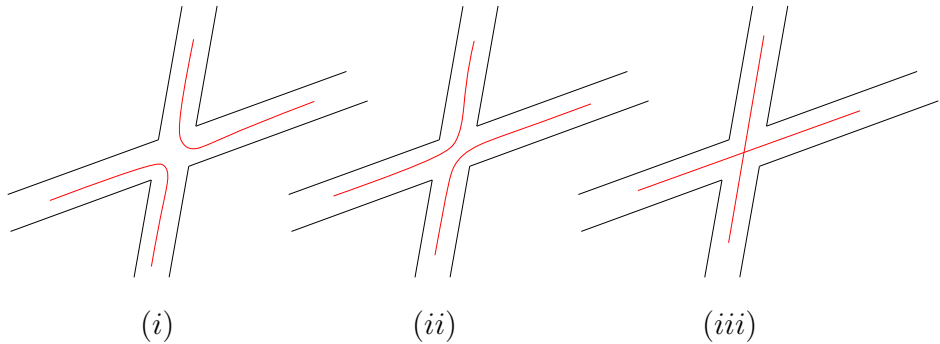}
  \captionof{figure}{The three possible patterns at a crossroad}
  \label{fig:cross2}
\end{figure}

On the event $A^\eps(a_1,a_2,a_3,a_4)$, the union of the excursions in $\Fc$ crosses each sub-tube $I_i$ exactly once. At the crossroad, the excursions in the sub-tubes can connect into each other in three possible ways (see Figure~\ref{fig:cross2}).
 Each way gives rise to two bridges and two pairs of endpoints among  $x_1'\in[a_1],x_2'\in[a_2], x_3'\in[a_3], x_4'\in[a_4]$. 
Then the two bridges are connected by excursions in the outer annulus $A$ into the excursion(s) in $\Fc$. 
Conditionally on $x_1', x_2', x_3', x_4'$ and on the pairing, the bridges and the excursions in $A$ are independent.
We can therefore  separately compute the masses of the bridge measures and the masses of the excursion measures while distinguishing different patterns for both cases. Then, for each combination of patterns, we integrate the product of the masses with respect to $d\lambda(x_1')d\lambda(x_2')d\lambda(x_3')d\lambda(x_4')$ on $[a_1]\times [a_2] \times [a_3]\times [a_4]$ and then sum them up.

\subsubsection*{The masses of the bridges}
We will illustrate the computation on pattern (i) (the same idea works for the other two patterns). 
For pattern (i), we need to compute the masses of two bridge measures that stay in $T^\eps(a_1,a_2) \cup T^\eps (a_3, a_4)$, one from $x_1'$ to $x_3'$, the other from $x_4'$ to $x_2'$. We will only compute here the mass of the bridge measure from $x_1'$ to $x_3'$ (the computation for the bridge measures from $x_4'$ to $x_2'$ works again in the same way). In the $\eps\to 0$ limit, this bridge measure can be decomposed into the following measures (see Figure~\ref{tubeb4})

\begin{figure}[h!]
\centering
\includegraphics[trim = 60mm 0mm 60mm 0mm, clip, width=0.55\textwidth]{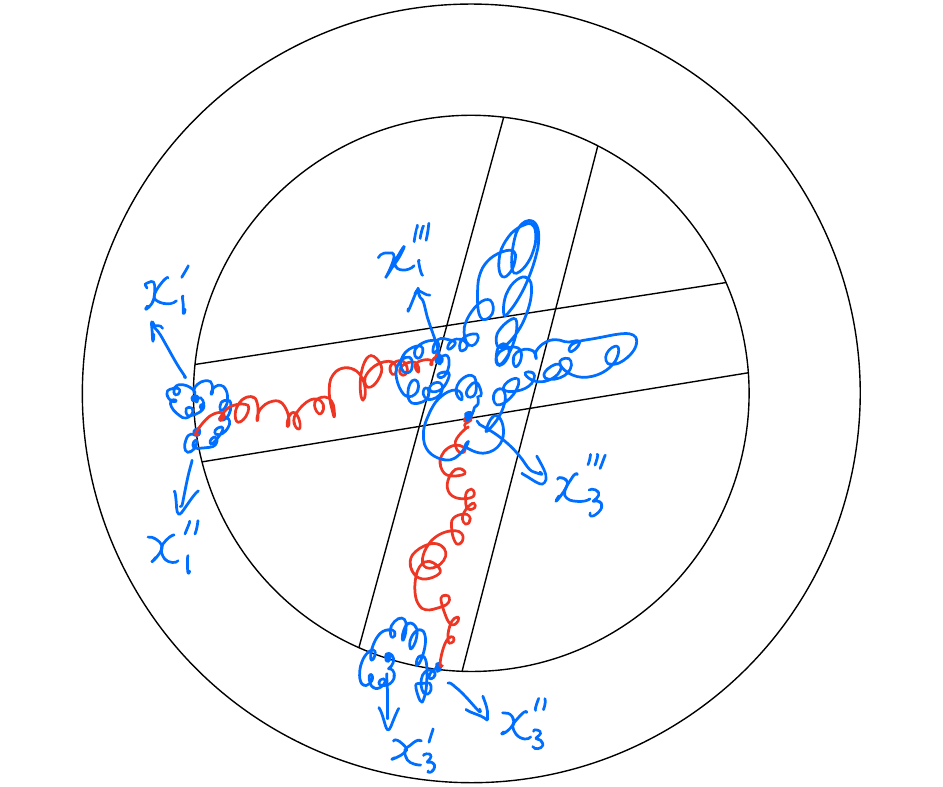}
\caption{Decomposition of the bridge measure}
\label{tubeb4}
\end{figure}

\begin{enumerate}
\item A bridge measure from $x_1'$ to $x_1''$ in $A\cup T^\eps(a_1,a_2)$ without crossing $T^\eps(a_1,a_2)$
\item An excursion measure from $x_1''$ to $x_1'''$ in $I_1$
\item A bridge measure from $x_1'''$ to $x_3'''$ in $T^\eps(a_3,a_4) \cup T^\eps(a_1,a_2)$ without crossing $T^\eps(a_1,a_2)$ or $T^\eps(a_3, a_4)$
\item An excursion measure from $x_3'''$ to $x_3''$ in $I_3$
\item A bridge measure from $x_3''$ to $x_3'$  in $A\cup T^\eps(a_3,a_4)$ without crossing $T^\eps(a_3,a_4)$
\end{enumerate}
After rescaling and integration, we get that the mass of the bridge from $x_1'$ to $x_3'$ is equivalent to a constant factor (depending on the $x_1'$ and $x_3'$) times 
$
\exp(-\pi (|I_1|+|I_3|)/\eps).
$
Similarly, the mass of the bridge from $x_1'$ to $x_3'$ is equivalent to a constant factor (depending on the $x_2'$ and $x_4'$) times 
$
 \exp(-\pi (|I_2|+|I_4|)/\eps),
$
where $|I_i|$ denotes the length of the tube $I_i$.

In fact, for all of the three patterns, the product of the masses of the two corresponding bridges will be equivalent to a constant factor (depending on $X':=(x_1', \ldots, x_4')$ and on the pattern) times $\exp(-\pi (|I_1|+|I_2|+|I_3|+|I_4|)/\eps)$.

\subsubsection*{The masses of the excursions in $A$} 
Given a crossroad pattern (thus also a pairing among $x_1', x_2', x_3', x_4'$), here are the different ways of connecting the two bridges into excursion(s):
\begin{itemize}
\item If  the two bridges are connected into one single excursion, then in the same way as the non-crossing case, there are $8$ ways (or $4$ modulo orientations) of doing it. For each  pattern in $A$, the product of the masses of the excursions in $A$ is equivalent to a constant factor  (depending on the pattern and on $X'$) times $\Pb(\Nc=1)$ times the density function $f$ applied to the corresponding pair. 

\item If the two bridges are connected into two different excursions, then the excursions in $A$ will contribute a constant factor  (depending on the pattern and on $X'$) times $\Pb(\Nc=2)$ times the density function $f_2$ applied to the corresponding two pairs.
\end{itemize}

\subsubsection*{Conclusion of the proof}
For each combination of patterns, the integral of the product of the masses is equivalent to a constant factor (depending on the pattern) times $\Pb(\Nc=1) f$ or $\Pb(\Nc=2) f_2$ (for $f$ or $f_2$ applied to the corresponding pair(s) depending also on the pattern) times  $\exp(-\pi (|I_1|+|I_2|+|I_3|+|I_4|)/\eps)$.
The probability of $\{\Nc=1\}$ and the function $f$ are known by Section~\ref{S3} and $\Pb(\Nc=2) f_2(a_1, a_3, a_2, a_4), \Pb(\Nc=2) f_2(a_1,a_4,a_3,a_2)$ are known because  the parings $(a_1, a_3), (a_2, a_4)$ and $(a_1, a_4), (a_3, a_2)$ are non-crossing. The only term for which we do not yet know that it is determined by the law of the trace of $\Ec$ is $\Pb(\Nc=2) f_2(a_1,a_2,a_3,a_4)$. However, since the total sum of all the terms is given by $\Pb(A^\eps(a_1,a_2,a_3,a_4))$ (which is determined by the law of the trace of $\Ec$), we can conclude that $\Pb(\Nc=2) f_2(a_1,a_2,a_3,a_4)$ is also determined by the law of the trace of $\Ec$.

We have  now determined $\Pb(\Nc=2) f_2 (a_1, a_2, a_3, a_4)$ for all distinct points $a_1, \ldots , a_4$. 
Since $f_2$ is a smooth function, this determines this function for all $a_1, \ldots, a_4$ in $(\partial U)^4$. 
Integrating this on $(\partial U)^4$, we obtain $\Pb(\Nc=2)$, and from this we can therefore also 
deduce $f_2$.

\section {Induction on the number of pairs} \label{S4}

Now let us proceed to the general induction. Again, we will provide details only for the parts of the arguments that involve new ideas (compared to how to deduce 
the result for two pairs from the result for one pair). 

Let us first define the density functions $f_k$ for all $k\ge 2$. If  $\Pb({\mathcal N}=k)>0$, then on the event $\{{\mathcal N}=k\}$, 
we define $( x_1,x_2) ,\ldots, (x_{2k-1}, x_{2k})$ to be an ordered  set of $k$ ordered pairs in $\Dc$ by assigning a uniformly chosen order between 
the $k$ pairs, and a uniformly chosen order for each of the pairs. We let $f_k$ be the density function on $(\partial U)^{2k}$ thus obtained.
Just as before, the function $f_k$ is
positive and smooth. 

Our inductive assumption is the following: We assume that for all $k\le n-1$ we know $\Pb({\mathcal N}=k)$ and the corresponding density function $f_k$. 
If $\sum_{k=1}^{n-1} \Pb({\mathcal N}=k)<1$, then we want to work out the value of $\Pb({\mathcal N}=n)$ and the function $f_n$.

\subsection*{Non-crossing pairs}
We first consider the case where the $n$ pairs $(a_1,a_2),\cdots, (a_{2n-1}, a_{2n})$ of distinct points are {\em non-crossing}, i.e. for any 
$1 \le i < j \le n$, the segments $[a_{2i-1}, a_{2i}]$ and $[a_{2j-1}, a_{2j}]$ are disjoint. This part of the proof will be very similar to the 
corresponding part when $n=2$.

The tubes $T_i=T^\eps(a_{2i-1}, a_{2i})$ for $i=1,\cdots, n$ are therefore also disjoint for $\eps$ small enough.
Let $A^\eps=A^\eps(a_1,\cdots, a_{2n})$ be the event that (i) $\Tc(\Ec)\cap U\subset\cup_{i=1}^{2n} T_i$ and that (ii) for all $i$,  $(\Tc(\Ec)\cap T_i)\cup [a_{2i-1}] \cup [a_{2i}]$ is connected and has a cut point disconnecting $ [a_{2i-1}]$ from $ [a_{2i}]$.
The event $A^\eps$ is then measurable with respect to the trace of $\Ec$.

\begin{figure}[h!]
        \centering
        \includegraphics[width=0.45\textwidth,page=1]{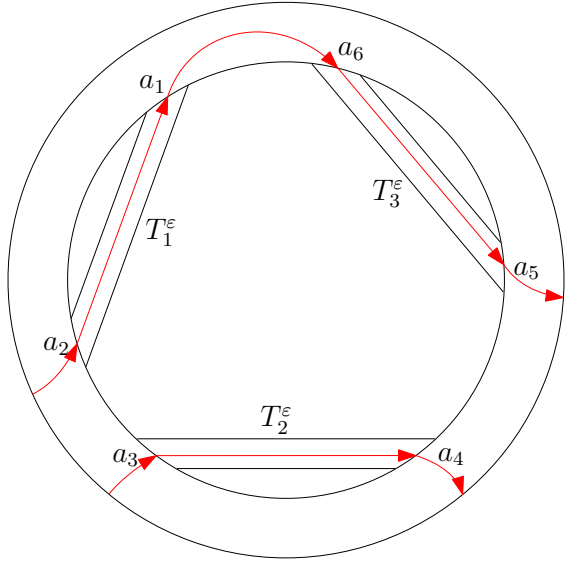}
        \caption{One way of connecting three bridges into two excursions.}
        \label{tubes}
\end{figure}

On $A^\eps$, the union of the excursions in $\Fc$ stays in $\cup_{i=1}^{2n} T_i \cup A$ and crosses each tube $T_i$ exactly once (for example see Figure~\ref{tubes}).
It is not difficult to see that $\Pb(A^\eps)$ is equivalent to $\sum_{k=1}^n \Pb(A^\eps, \Nc=k)$ and the term $\Pb(A^\eps, \Nc>n)$ is of smaller order.
We can compute $\Pb(A^\eps, \Nc=k)$ for each $k\le n-1$ by enumerating all possible ways of connecting the $n$ bridges in the $n$ tubes into $k$ excursions via excursions in $A$. By making similar decompositions as in the two pairs case, it is easy to show that  there exist some function $\tilde f_k$ depending only on $f_k$ (which is known by the induction assumption) such that
$$\Pb(A^\eps\mid \Nc=k) \underset{\eps\to 0}{\sim}  \tilde f_k(a_1,\cdots, a_{2n}) \times \eps^{2n} \times \exp(-\pi(|T_1|+\cdots + |T_n|)/\eps).$$
On the event $A^\eps\cap\{\Nc=n\}$, the $n$ bridges in the $n$ tubes must belong to $n$ excursions, which gives rise to
\begin{align*}
\Pb(A^\eps \mid \Nc=n) \underset{\eps\to 0}{\sim}  & 2^n |\Sc_n| \times f_n(a_1,\cdots, a_{2n}) \times c_0(a_1, a_2)\cdots c_0(a_{2n-1}, a_{2n})\\
 & \times \eps^{2n} \times   \exp(-\pi(|T_1|+\cdots + |T_n|)/\eps),
\end{align*}
where $\Sc_n$ is the set of permutations of $n$ elements.
This allows us to deduce $\Pb(\Nc=n) f_n(a_1,\cdots, a_{2n})$ for all set of 
$n$ non-crossing pairs.

\subsection*{Crossing pairs}\label{sec:crossing}
Let us note that just as in the case of two pairs (and we will briefly explain this in Section~\ref {Sfinal}), 
this case of crossing pairs could be bypassed if the only goal would be to establish Corollary~\ref {main corollary}. 
The proof will be reminiscent of the case of two crossing pairs, but we will use an additional induction over the ``number of crossings'' of the considered $n$ crossing pairs.

\subsubsection*{Setting up the induction over the number of crossings} 
We now consider a set of  $n$ pairs $(a_1,a_2),\cdots, (a_{2n-1}, a_{2n})$ of distinct points such that for any $1 \le i_1 < i_2 < i_3 \le n$, 
\begin{align}\label{concurrent}
[a_{2i_1-1}, a_{2i_1}] \cap  [a_{2i_2-1}, a_{2i_2}]\cap  [a_{2i_3-1}, a_{2i_3}] = \emptyset.
\end{align}
Our goal is to show that $\Pb(\Nc=n) f_n(a_1,\cdots, a_{2n})$ can be determined from the law of the trace of $\Ec$.
The set of $n$ pairs of distinct points satisfying (\ref{concurrent}) being dense in $(\partial U)^{2n}$, this will suffice to conclude, as we can integrate to recover $\Pb(\Nc=n)$ and then determine $f_n$. 

For any such $n$ pairs, we define its \emph{number of crossings} $m$ to be the number of pairs of segments (out of the $n(n-1)/2$ pairs) that do intersect.
The idea is (for each fixed $n$) to derive the result via an induction on the number $m$ of crossings (so there is a double induction here). 
We already know that the result holds for all $n$ pairs with no crossings (and that it holds for all sets of less than $n$ pairs). 

For $m\ge 1$, assume that $\Pb(\Nc=n) f_n(a_1,\cdots,a_{2n})$ 
for all $n$ pairs $(a_1,a_2),\cdots, (a_{2n-1}, a_{2n})$ that have no more than $m-1$ crossings is determined by the law of the trace of $\Ec$. 
The goal of the next paragraphs is to determine $\Pb(\Nc=n) f_n(a_1,\cdots,a_{2n})$ for all sets of $n$ pairs with $m$ crossings. 

\subsubsection*{Decomposition}
Let us fix $n$ pairs $(a_1,a_2),\cdots, (a_{2n-1}, a_{2n})$ with $m$ crossings.
For all $i$, let $T_i=T^\eps(a_{2i-1}, a_{2i})$. Condition~\eqref{concurrent} ensures that if $\eps$ is small enough, then for any $1\le i_1<i_2<i_3\le n$, one has $T_{i_1} \cap T_{i_2} \cap T_{i_3}=\emptyset$.
We call each of the components of  $T_i\setminus(\cup_{1\le k\le n,\, k\not=i} T_k)$ a \emph{sub-tube}, see Figure~\ref{sub-tube}. 
The boundary of each sub-tube is naturally divided into four parts where the two smaller parts are called its \emph{ends}. 
Let $A^\eps=A^\eps(a_1,\cdots, a_{2n})$ be the event that 
(i) $\Tc(\Ec)\cap U \subset \cup_{i=1}^{2n} T_i$ and that
(ii)  for all any sub-tube $S$ with two ends $[e_1]$ and $[e_2]$, $(\Tc(\Ec)\cap S)\cup[e_1]\cup[e_2]$ is connected and has a cut point separating $[e_1]$ and $[e_2]$.
The event $A^\eps$ is then measurable with respect to the trace of $\Ec$.

\begin{figure}[h]
\centering
  \includegraphics[width=.4\linewidth, page=2]{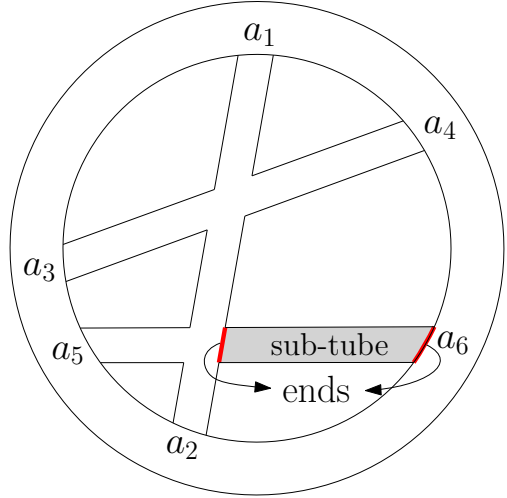}
  \caption{The tubes and sub-tubes. Here $n=3$ and $m=2$.}
  \label{sub-tube}
\end{figure}

As for the two pairs case, on the event $A^\eps$, we decompose the excursions in $\Fc$ into $n$ bridges with $2n$ ends $x_i'\in[a_i]$ for $i=1,\cdots, 2n$ 
(whose union crosses each sub-tube exactly once) and some excursions in $A$ connecting these bridges into excursions in $\Fc$. Conditionally on the family $X':= 
(x_i')_{i \le 2n}$ and on the pairing of these $2n$ points, the bridges and the excursions in $A$ are independent. We can therefore respectively enumerate all possible configurations for the bridges and for the excursions in $A$ and compute the  masses of the corresponding measures. 
Finally, for each combination of configurations, we will integrate the product of the corresponding masses and then sum up all the terms to get $\Pb(A^\eps)$.

\subsubsection*{Crossing-configurations and masses of the $n$ bridges} 
At each of the $m$ crossroads, there are three ways to connect the incoming pieces of excursions in the four adjacent sub-tubes (see Figure~\ref{fig:cross2}): 
two \emph{non-crossing patterns} and one \emph{crossing pattern}. We define a \emph{crossing-configuration} 
to be a function that assigns to each of the  $m$ crossroads one of the three patterns. For each of the $3^m$ crossing-configurations, the connection rule 
 gives rise to $n$ bridges joining the $2n$ points in $[a_1], \ldots, [a_{2n}]$ via some pairing, 
 and possibly also to closed loops, so that the union of the bridges and the loops 
 do cross each sub-tube exactly once. We call a  crossing-configuration \emph{admissible} (i.e., it represents a possible configuration of how the $n$ bridges cross the sub-tubes on the event $A^\eps$) if it contains no loop. 

So, each admissible crossing-configuration induces a pairing of the $2n$ points $(a_1, \ldots, a_{2n})$. 
An important simple observation is that the crossing-number of this new pairing can not be larger 
than the crossing-number $m$ of the initial pairing $(a_1, a_2), \ldots, (a_{2n-1}, a_{2n})$. 
Furthermore, the unique crossing-configuration that gives rise to the crossing number $m$ is the one where at each of the $m$ cross-roads, 
one assigns the crossing pattern (in other words, all the other crossing-configurations give rise to a pairing with a smaller crossing number). 

For each given admissible crossing-configuration, the product of the masses of the $n$ bridges is equivalent to a
constant (depending on $X'$ and on the crossing-configuration) 
times $\exp(-\pi L /\eps)$, where $L=\sum_{i=1}^n |a_{2i-1}-a_{2i}|$ is the total length of all tubes.

\subsubsection*{Connecting-configurations and masses of the excursions in $A$} Given the pairing of $X'$ (or equivalently of $(a_1, \ldots, a_{2n})$) that comes from the afore-mentioned crossing-configuration, 
we enumerate all possible ways of connecting these $n$ bridges into excursions in $\Fc$ by adding excursions in $A$. More precisely, each endpoint of a bridge is either connected to an endpoint of another bridge, or to the boundary $\partial\Ub$, and we call such a way of connection a \emph{connecting-configuration}. We emphasize that for any given pairing of $X'$, a connecting-configuration is determined by the excursions in $A$ only. 
This step is the same as the non-crossing case. For all $k< n$, if a connecting-configuration connects the $n$ bridges into $k$ excursions, 
then it will contribute a constant (depending soley on $X'$ and on the connecting-configuration) times $\Pb(\Nc=k)$ times $f_k$ 
(applied to the corresponding $k$ pairs in $\Dc$). If $\Nc=n$, then there the contribution is a constant factor times
$\Pb(\Nc=n)$ times $f_n$ (applied to the corresponding $n$ pairs).

\subsubsection*{Conclusion of the proof}
For each combination of admissible crossing-configurations and connecting-configurations, we integrate the product of the masses of the corresponding measures with respect to $\prod d\lambda(x_i')$ on $\prod [a_i]$.
Keeping in mind that the functions $f_k$ are smooth, we see that each resulting term 
is equivalent to some constant times $\Pb(\Nc=k)$ times $f_k$ (applied to the corresponding pairs) times 
$\eps^{2n} \exp(-\pi L/\eps)$, for $1\le k\le n$. By the induction assumption on $\Nc$, we know that 
only the terms involving $\Pb(\Nc=n) f_n$ are not (yet) determined by the law of the trace of $\Ec$.
However, we have argued that only one crossing-configuration gives rise to the pairing $(a_1,a_2), \cdots, (a_{2n-1}, a_{2n})$ with $m$ crossings, 
and that all the other crossing-configurations give rise to pairings with at most $m-1$ crossings. Hence, by the induction hypothesis on $m$, 
we see that by subtracting all the already known terms from $\Pb(A^\eps)$, we can also determine  $\Pb(\Nc=n) f_n(a_1,\cdots, a_{2n})$ from the law of the trace of $\Ec$, for this 
$(a_1, \ldots, a_{2n})$ with $m$ crossings. 

This then completes the induction over $m$, and shows that for any $2n$ distinct points $a_1, \ldots, a_n$ 
satisfying (\ref{concurrent}), the quantity  $\Pb(\Nc=n) f_n (a_1, \ldots , a_{2n})$  is determined by the law of the trace of $\Ec$. 
This finally allows to determine $\Pb(\Nc=n)$ and the function $f_n$. 

This completes the induction on $n$, and concludes the proof of  Lemma~\ref{main lemma} as well as of Proposition~\ref{main theorem}.

\section {Some comments on Brownian loop-soup cluster decompositions}       
\label {Sfinal} 
We now come back to features of the Brownian loop-soup clusters, in the set-up described in the second part of the introduction:  
Let $\Lambda$ be a Brownian loop-soup in $\Ub$ with intensity $c\le 1$. We call 
$\partial$ the outer boundary of the outermost cluster surrounding the origin, we let $O= O_\partial$ be the open domain surrounded by $\partial$ 
and we denote the collection of Brownian loops that stay in $\overline O$ by $\Lambda_0$. 
We also denote by $\phi_\partial$ the conformal map from $O$ onto $\Ub$ such that $\phi_\partial(0)=0$ and $\phi_\partial'(0)>0$. Since $\partial$ is a 
continuous simple loop, $\phi_\partial$ can be extended by continuity to a continuous one-to-one map from $\overline O$ onto the closed unit disk. 

Recall that it is proved in \cite{Qian-Werner} that $\phi_\partial(\Lambda_0)$ is independent of $\partial$ and it is the union of two independent sets of loops: (1) a Brownian loop-soup in $\Ub$ and (2) the set $\phi_\partial(\Lambda_\partial)$ where $\Lambda_\partial$ is the set of loops in $\Lambda_0$ that touch $\partial$. Furthermore, the law of $\phi_\partial(\Lambda_0)$  is conformally invariant. Note that each loop of $\phi_\partial (\Lambda_0)$ can be decomposed into excursions away from the unit circle.

The first goal of the present section is to prove the following lemma:
\begin{lemma}
\label {lemma6}
The family of excursions induced by $\phi_\partial(\Lambda_\partial)$ is a locally finite point process of Brownian excursions in $\Ub$.
\end{lemma}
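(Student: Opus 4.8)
The plan is to show two things about the point process $\Ec^\partial$ of excursions induced by $\phi_\partial(\Lambda_\partial)$: first, that it is a (random) collection of unoriented excursions attached to endpoints on $\partial\Ub$ of the type described in the introduction, and second, that the square-summability condition $\sum_i |x_i-y_i|^2 < \infty$ holds almost surely, which by the Borel--Cantelli argument recalled in the introduction is equivalent to local finiteness. The structural part is essentially bookkeeping: a loop in $\Lambda_\partial$ touches $\partial$, so after applying $\phi_\partial$ it becomes a loop in $\overline\Ub$ touching $\partial\Ub$, and decomposing it into its excursions away from $\partial\Ub$ gives a countable family of Brownian excursions, each with a well-defined unordered pair of endpoints $\{x_i,y_i\}$ on the circle (the pair may be a single point for a degenerate excursion, but those have zero contribution and can be discarded). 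Since there are countably many loops and each contributes countably many excursions, $\Ec^\partial$ is a countable point process of excursions; that each excursion, conditionally on its endpoints, has the law $P_{x_i,y_i}$ follows from the excursion decomposition of the Brownian loop measure and the strong Markov property, exactly as in the classical decomposition recalled in Section~\ref{S2}.

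The real content is the summability estimate. First I would reduce it, via conformal invariance of $\phi_\partial(\Lambda_0)$ and hence of $\phi_\partial(\Lambda_\partial)$ (proved in \cite{Qian-Werner}), to an estimate that can be computed directly with the Brownian loop measure $\mu$. Concretely, for $\rho<1$ let $U_\rho=\rho\Ub$; I want to control the total ``excursion length'' contributed by loops of $\phi_\partial(\Lambda_\partial)$ via their excursions away from $\partial\Ub$ into the annulus $\Ub\setminus\overline{U_\rho}$. The key point is that every loop in $\Lambda_\partial$ touches $\partial$ and stays in $\overline O$, so in the image it touches $\partial\Ub$; I would use the fact that the Brownian loop measure of loops that touch a given boundary arc and have diameter at least $\eps$ is finite, together with an explicit bound on the expected number (under $c\mu$) of excursions of diameter between $2^{-k-1}$ and $2^{-k}$. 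Then $\sum_i |x_i-y_i|^2 \le \sum_i (\mathrm{diam}\, E_i)^2 \le \sum_k 2^{-2k} N_k$ where $N_k$ is the number of excursions with diameter in $(2^{-k-1},2^{-k}]$, and it suffices to show $\Ef[\sum_k 2^{-2k}N_k] < \infty$, i.e. that $\sum_k 2^{-2k}\,\Ef[N_k]$ converges. The bound $\Ef[N_k] = O(2^{k})$ — or indeed anything that is $o(2^{2k})$ with summable correction — would close the argument; such a bound should come from the loop-measure estimate that loops touching $\partial$ with an excursion of diameter $\approx 2^{-k}$ reaching depth $\approx 2^{-k}$ into the domain have $\mu$-mass of order $2^{-k}$ times the boundary length, summed over a $\approx 2^k$-cover of $\partial\Ub$. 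One subtlety: the conditioning on $\partial$ has to be handled — but since the law of $\phi_\partial(\Lambda_\partial)$ is independent of $\partial$ (again from \cite{Qian-Werner}), one may as well compute under this fixed unconditional law, so no conditioning issue arises; alternatively one works directly in $O$ before the map, where one bounds the loop measure of loops in $\overline O$ touching $\partial$ whose $\phi_\partial$-image has a large excursion, using Koebe-type distortion control on $\phi_\partial$ near $\partial$ on the event that $\partial$ is not too wild, and a separate argument that $\partial$ is a.s. a John domain boundary (or simply rectifiable enough for the estimate to survive).

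The main obstacle I anticipate is precisely the interplay between the random domain $O$ (equivalently, the roughness of $\partial$) and the diameter-vs-depth estimate for excursions: a priori $\phi_\partial$ could distort lengths badly near $\partial$, so that an excursion of small Euclidean diameter in $\Ub$ corresponds to a loop-excursion in $O$ of very different size, and the naive loop-measure count could blow up. I expect this is handled cleanly by working with $\phi_\partial(\Lambda_0)$ directly — its law is a known fixed law (independent of $\partial$ and conformally invariant), so the estimate becomes a statement purely about that fixed point process of loops in $\Ub$, namely that the loops touching $\partial\Ub$ decompose into a square-summable family of excursions, and this can be verified by a direct second-moment / first-moment computation with the explicit Brownian loop measure and the boundary Poisson kernel estimates of the type used in Lemma~\ref{trapezoid} and the decomposition Lemma~\ref{lem:decomp}. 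Once $\sum_i |x_i-y_i|^2 < \infty$ a.s. is established, local finiteness follows from the Borel--Cantelli argument already recorded in the introduction, and the lemma is proved.
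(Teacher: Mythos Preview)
You have the emphasis exactly backwards, and this produces a genuine gap.

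\medskip

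\textbf{Local finiteness is the easy part.} The paper disposes of it in one sentence: for any compact $K\subset O$, only finitely many loops of $\Lambda$ reach $K$ (by local finiteness of the loop-soup), and each such loop is a continuous curve so makes only finitely many excursions to $K$. Hence only finitely many excursions of $\Ec^\partial$ have diameter exceeding any given $\eps$. Your elaborate program via $\sum_i|x_i-y_i|^2<\infty$, moment bounds $\Ef[N_k]=O(2^k)$, Koebe distortion, and John-domain regularity of $\partial$ is entirely unnecessary (and you yourself flag several unresolved issues in it).

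\medskip

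\textbf{The conditional Brownian-excursion law is the hard part, and your argument for it is wrong.} You write that ``each excursion, conditionally on its endpoints, has the law $P_{x_i,y_i}$ follows from the excursion decomposition of the Brownian loop measure and the strong Markov property.'' This would be fine for a \emph{single} Brownian loop conditioned on a \emph{fixed} boundary. But here $\partial$ is the outer boundary of a loop-soup cluster: it is itself a complicated functional of all the loops, and the loops in $\Lambda_\partial$ are precisely those that touch $\partial$ and stay in $\overline O$. Conditioning on $\partial$ is therefore a global, self-referential conditioning on the entire Poisson process, and the strong Markov property of a single loop does not by itself tell you that, under this conditioning, the excursions into $O$ remain independent Brownian excursions with the given endpoints. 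This is exactly the content that needs proof.

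\medskip

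The paper's argument is a resampling/rigidity argument. One fixes concentric circles $L,L_1,L_2$ of radii $1>r>r_1>r_2$, decomposes the loops crossing both $L_1$ and $L_2$ into excursions outside the disk of radius $r_2$ (reaching $L_1$) and bridges inside the disk of radius $r_1$, and uses that for the \emph{unconditioned} loop-soup, resampling these bridges (given the excursions and the pairing) yields another loop-soup. The key step is to show that on the event $\{\partial\subset A\}$ (where $A$ is the annulus between $L$ and $\partial\Ub$), this resampling preserves $\partial$: the resampled cluster boundary $\tilde\partial$ equals $\partial$ almost surely. This uses the fact that loop-soup clusters are disjoint and locally finite, so the points of $\partial$ cannot be split among several clusters after resampling. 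Once $\tilde\partial=\partial$, the bridge-resampling invariance passes to the conditional law given $\partial$; one then lets $r\to1$ (so $\phi_\partial\to\mathrm{id}$ on the conditioning event) and finally $r_1\to1$ to deduce that the excursions of $\Ec^\partial$ into any disk $r_2\Ub$ are, conditionally on their endpoints on $\partial(r_2\Ub)$, independent Brownian bridges in $\Ub$ --- which is exactly the statement that $\Ec^\partial$ is a point process of Brownian excursions.
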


Together with Proposition~\ref {main theorem} and the description of the law of the trace of $\Lambda_\partial$ for $c=1$ in \cite {Qian-Werner}, this implies Corollary~\ref {main corollary}. 

\begin{proof}
The local finiteness part of the statement is immediate, because there are only finitely many loops in $\Lambda_\partial$ that reach any given compact subset $K$ of $O$ (this is due to the local finiteness of the original loop-soup), and each loop in $\Lambda_\partial$ can create only finitely many excursions to $K$ (because each loop is a continuous loop).
It then only remains to prove that, conditionally on $\partial$ and on the pairs of extremities of the excursions on $\partial$, the excursions are distributed as independent Brownian excursions in $O$ with these given extremities.

Let  $L$, $L_1$ and $L_2$ denote three given concentric circles with radii $1>r>r_1>r_2$ around the origin in $\Ub$. We let $A$ denote 
the annulus between the unit circle and  $L$. We will be interested in the event that $\partial$ is contained in $A$, see Figure~\ref{loop-soup1}. 

There are almost surely 
finitely many loops in $\Lambda$ that intersect both $L_1$ and $L_2$ and every such loop makes almost surely a finite number of crossings between $L_1$ and $L_2$.
Everyone of these loops can be decomposed into the concatenation of a number of excursions outside of the disk $O_2$ (encircled by $L_2$) that do reach $L_1$ with bridges (in the disk $O_1$) that join the endpoints (on $L_2$) of these excursions in a certain order. 
We then know the following fact:
\smallskip

\noindent ($*$)
{\em If one conditions on the former part (i.e., on the family of excursions) and the pairing of the end-points of the bridges, then the remaining bridges are 
independent and distributed according to the bridge measure in $O_1$.}
\smallskip

\noindent In particular, ($*$) implies that when one resamples all these bridges, one gets another loop-soup $\tilde \Lambda$ (and we define $\tilde \partial$, the outer boundary of its outermost cluster that surrounds the origin).

Let us now suppose that the boundary $\partial$ is a subset of $A$. We now argue that almost surely, $\tilde \partial \subset A$, following similar 
ideas as in the proof of \cite[Lemma 4]{Qian-Werner}: We can note that all the points on $\partial$ will still be on the boundary of some macroscopic 
loop-soup cluster of $\tilde \Lambda$. Each of these loop-soup clusters has also to intersect $O_1$, which implies 
that there can only be finitely many of them (the clusters in a loop-soup are locally finite \cite{MR2979861}). 
But if there is more than one, and only finitely many of them, then it means that at least two of them are at zero distance from each other, which 
is known to be impossible (the clusters in a loop-soup are all disjoint \cite{MR2979861}). 
Therefore, we get indeed that on the event $\partial\subset A$, we necessarily have $\tilde \partial = \partial$, so that the event $\partial \subset A$ (and 
distribution of $\partial$ on this event) is 
preserved by the resampling operation in ($*$). 

\begin{figure}[h]
\centering
  \includegraphics[width=\linewidth]{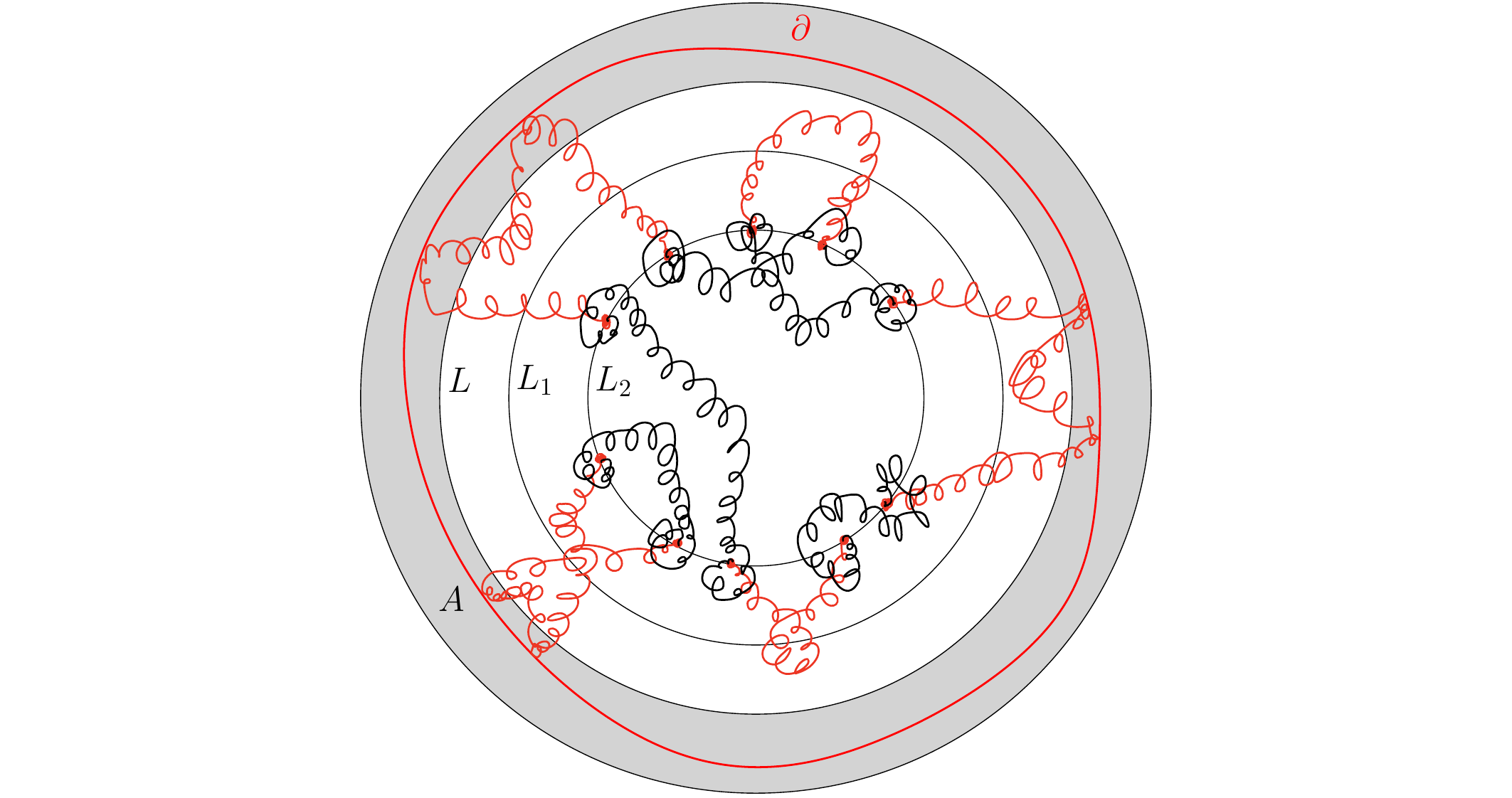}
  \caption{Decomposition of the loops in $\Lambda$ that intersect both $L_1$ and $L_2$ into excursions outside of $O_2$ that do reach $L_1$ (red) and bridges in $O_1$ (black)}
  \label{loop-soup1}
\end{figure}

We can now fix $r_1$ and $r_2$ and let $r$ tend to $1$. On the event $\partial\subset A$, the conformal map $\phi_\partial$  then tends to the identity map. 
Let $\hat L_1, \hat L_2$ and $\hat O_1, \hat O_2$ be the images of $L_1, L_2$ and $O_1, O_2$ under $\phi_\partial$.
Then the statement ($*$) (conditionally on $\partial\subset A$) can also be viewed as a statement on $\phi_\partial(\Lambda_0)$.
More precisely, we can decompose $\phi_\partial(\Lambda_0)$ into a number of excursions outside of $\hat O_2$ that do reach $\hat L_1$ with end-points on $\hat L_2$ and an equal number of bridges in $\hat O_1$ connecting those end-points. Conditionally on the excursions and on how their end-points should be paired by the bridges, the bridges are distributed as independent Brownian bridges in $\hat O_1$.
Note that $\phi_\partial(\Lambda_0)$ is in fact independent of $\partial$ (hence also of $\hat L_1$ and $\hat L_2$). Moreover, $\hat L_1$ and $\hat L_2$ tend to $L_1$ and $L_2$. This gives rise to the following statement in the limit:
\smallskip

\noindent
($**$) {\em We can decompose $\phi_\partial(\Lambda_0)$ into a number of excursions outside of $O_2$ that do reach $L_1$ with end-points on $L_2$ and an equal number of bridges in $O_1$ connecting those end-points. Conditionally on the excursions and on how their end-points should be paired by the bridges, the bridges are distributed as independent Brownian bridges in $O_1$.}
\smallskip

\noindent

Now, the idea is to let $r_1$ tend to $1$ in ($**$). 
Let $\Ec$ be the set of excursions away from the unit circle induced by $\phi_\partial(\Lambda_\partial)$. We can then define $\Ec_\delta, \Cc, \Cc_\delta$ just as in the previous sections (even that we do not yet know that $\Ec$ is a point process). 
Then, almost surely as $r_1$ tends to $1$, the pairs of end-points on $L_2$ induced by the decomposition in ($**$) will converge to $\Cc_{1-r_2}$ (the convergence is for finite sets of pairs of points). This implies that, if we decompose $\Ec_{1-r_2}$  into the excursions in the annulus between $L$ and $L_2$ and the bridges in $\Ub$, then conditionally on the excursions and on $\Cc_{1-r_2}$, the bridges are distributed like independent Brownian bridges in $\Ub$ with endpoints given by $\Cc_{1-r_2}$.
Since this is true for all $r_2$, it implies that $\Ec$ is in fact a point process of Brownian excursions. 
\end{proof}

Note that in the $c=0+$ limit, the lemma has the following interpretation: Conditionally on the outer boundary $\partial$ of a Brownian loop, the excursions away from $\partial$ is a (locally finite) point process of Brownian excursions. This can be derived directly from the definition of the Brownian loop measure as we need not worry about the disconnection of clusters anymore.

Another related example is when we look at a Brownian excursion in the upper half-plane from $0$ to $\infty$. Conditionally on its right (or left) boundary $\partial$, the excursions away from $\partial$ form again a point process of excursions. 

However, even for these two examples, we do not yet have a full description of 
the distribution of the traces of the point processes, as opposed to the critical loop-soup case. 
We do nevertheless know that all the point processes in Lemma~\ref {lemma6} do satisfy some conformal restriction property \cite{Qian-loop-soup}.
        
\medbreak 

A final observation is that in the case where $c=1$, one can use the additional resampling properties of the Brownian loop-soup 
from \cite {MR3618142} that are specific to that case. In the setting of the proof of Lemma~\ref {lemma6}, these resampling properties show 
that one knows the conditional law of the pairing (in order to form the bridges) between end-points of the excursions away from $L_2$, given these excursions (but not given the pairing of their end-points); indeed, the conditional probability of each pairing is proportional to the product of the Green's function of the corresponding bridges in $O_1$.
In the setting of the proof of Proposition~\ref {main theorem}, it follows that once one knows the value of $f_n (a_1, \ldots, a_{2n})$ for any $n$ non-crossing pairs $((a_1, a_2), \ldots, (a_{2n-1}, a_{2n}))$, one 
can deduce the value of $f_n$ for all $n$ pairs. So, if one only wants to prove only Corollary~\ref {main corollary}, one can in fact bypass the part of the proof about 
crossing configurations. 

\section*{Acknowledgements}
WQ acknowledges the support of an Early Postdoc.Mobility grant of the SNF and a JRF of Churchill College, Cambridge.
WW acknowledges the support of the SNF grant \# 175505, and is part of the NCCR Swissmap.

\bibliographystyle{plain}
\bibliography{cr}  
        
\end{document}